\newtheorem{thm}{Theorem}[section]
\newtheorem{cor}[thm]{Corollary}
\newtheorem{prop}[thm]{Proposition}
\newtheorem{defn}[thm]{Definition}
\theoremstyle{remark}
\newtheorem{rem}{Remark}[section]
\begin{document}

\title[Algebras of compactly supported continuous functions]
{On isomorphisms of algebras of compactly supported continuous
functions
}
\author{R. Lakshmi Lavanya}

\address{Indian Institute
of Science Education and Research Tirupati, Tirupati-517507}
\email{rlakshmilavanya@iisertirupati.ac.in}

%\date{\today}

\begin{abstract}
We study the general form of isomorphisms on the algebra of
compactly supported complex-valued continuous functions defined on
a locally compact Hausdorff space (the proof of which works for
the algebra of $\mathcal{C}^{(k)} -$differentiable functions on a
$\mathcal{C}^{(k)}-$ manifold as well). We obtain using only
topological techniques, that any such map is a composition of a
homeomorphism of the locally compact spaces (resp. $\mathcal{C}^{(k)}-$ diffeomorphism), and an automorphism
of the field of complex numbers. In the particular case when $X$
is a locally compact group, and the map preserves convolution
products, the resulting homeomorphism is also a group isomorphism.
An application of this gives a characterisation of the Fourier
transform on the algebra of Schwartz-Bruhat functions on locally
compact Abelian
groups.\\

\smallskip
\noindent \noindent \textit{2010 Mathematics Subject
Classification.} 54C05,54C40,22A10,43A25.\\
\textit{Keywords:} Continuous functions with compact support, Locally compact groups.
\end{abstract}
\maketitle
\section{Introduction}
\setcounter{equation}{0}
The fact that isomorphisms of algebras of functions defined over
certain topological spaces $X,$ taking values in a field
$\mathbb{F}$ determine the space X have been known for long since
the work of Gelfand and Kolmogoroff\cite{GK} and Milgram\cite{Mi}.
In 2005, Mr\v{c}un\cite{Mr} proved the following result for isomorphisms
of algebras of smooth functions. We denote $\mathbb{F}=\mathbb{R}$
or $\mathbb{C}.$
\begin{thm}(\cite{Mr})
For any Hausdorff smooth manifolds $M$ and $N$ (which are not
necessarily second-countable, paracompact or connected), any
isomorphism of algebras of smooth functions \\$T:
\mathcal{C}^\infty(N,\mathbb{F}) \rightarrow
\mathcal{C}^\infty(M,\mathbb{F})$ is given by composition with a
unique diffeomorphism $\tau: M \rightarrow N.$
\end{thm}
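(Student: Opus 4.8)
The plan is to reconstruct $N$, together with its smooth structure, from the abstract $\mathbb{F}$-algebra $A_N:=\mathcal{C}^\infty(N,\mathbb F)$ in a way rigid enough to force $T$ to be pullback along a diffeomorphism. Everything rests on one fact, which I would prove first:

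\emph{Claim.} Every nonzero $\mathbb F$-algebra homomorphism $\varphi\colon A_N\to\mathbb F$ is an evaluation $\mathrm{ev}_y\colon f\mapsto f(y)$ at a unique point $y\in N$.

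Granting the Claim, the theorem follows readily. For each $x\in M$ the composite $\mathrm{ev}_x\circ T\colon A_N\to\mathbb F$ is a nonzero homomorphism (some $Tg$ is the constant function $1$), hence equals $\mathrm{ev}_{\tau(x)}$ for a unique $\tau(x)\in N$; this is precisely the statement $(Tf)(x)=f(\tau(x))$ for all $f$, i.e. $T=\tau^\ast$. Applying the same construction to $T^{-1}$ produces $\rho\colon N\to M$ with $T^{-1}=\rho^\ast$, and then $(\rho\tau)^\ast=\tau^\ast\rho^\ast=\mathrm{id}$, so $\rho\tau=\mathrm{id}_M$ and $\tau\rho=\mathrm{id}_N$ because smooth functions separate points of a Hausdorff manifold; thus $\tau$ is a bijection with inverse $\rho$. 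Since Hausdorff manifolds carry smooth bump functions, the cozero sets $\mathrm{coz}(f)=\{f\neq0\}$, $f\in A_N$, form a basis for the topology of $N$, and $\tau^{-1}(\mathrm{coz}(f))=\mathrm{coz}(Tf)$ is open, so $\tau$ is continuous; symmetrically so is $\tau^{-1}$. For smoothness, near a point $\tau(x)$ choose a chart and use a bump function to extend its coordinate functions to $g_1,\dots,g_n\in A_N$; the coordinate expression of $\tau$ near $x$ then has components $g_i\circ\tau=Tg_i$, which are smooth, so $\tau$ is smooth, and by symmetry so is $\tau^{-1}$. Hence $\tau$ is a diffeomorphism, and it is unique since $Tf=f\circ\tau$ for all $f$ pins it down.

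The work therefore lies in the Claim, and this is where the absence of paracompactness is felt. One checks first that $\varphi(1)=1$ and, when $\mathbb F=\mathbb R$, that $\varphi$ is nonnegative on nonnegative functions — if $f\ge0$ then $f+\varepsilon$ is a square of a smooth function, so $\varphi(f)+\varepsilon=\varphi(f+\varepsilon)\ge0$ for all $\varepsilon>0$ — hence $\varphi$ is order preserving. If $N$ is second countable it admits a proper smooth exhaustion $p\colon N\to[0,\infty)$; comparing the powers $p^k$ with a smooth cutoff $u$ of $p$ (arranged so $p^k\ge c^k u$ for a constant $c\le\varphi(p)$ while $u\equiv1$ outside the compact set $p^{-1}[0,\varphi(p)+1]$) forces $\varphi(u)=0$, so $\varphi$ depends only on germs of functions near a compact set; a finite-subcover argument on a compact neighbourhood then yields $\ker\varphi=\mathfrak m_y$, i.e. $\varphi=\mathrm{ev}_y$. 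For a general manifold one must first use the idempotents of $A_N$ (characteristic functions of clopen sets) to localize $\varphi$ onto a single connected component — at most one component can receive idempotent-value $1$, and a squeezing/ultrafilter argument has to be run to prevent $\varphi$ from ``escaping to infinity'' among the components — and then re-run the exhaustion argument there. I expect this localization step to be the main obstacle: it is the only point at which the stated generality of $N$ really has to be confronted. Finally, the case $\mathbb F=\mathbb C$ reduces to the real one, since the image of $\mathcal C^\infty(N,\mathbb R)$ under $\varphi$ is a commutative $\mathbb R$-subalgebra of $\mathbb C$, hence $\mathbb R$ or $\mathbb C$, and $\mathbb C$ is excluded because $t+h^2$ is invertible for all real $h$ and $t>0$ (forcing $\varphi(h)^2\notin(-\infty,0)$); then $\varphi$ on real-valued functions is $\mathrm{ev}_y$ for some $y$, and $\mathbb C$-linearity ($\varphi(i\cdot1)=i$) gives $\varphi=\mathrm{ev}_y$ throughout.
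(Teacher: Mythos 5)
This statement is quoted by the paper from Mr\v{c}un's article \cite{Mr} and is not proved here, so there is no in-paper argument to compare against; judged on its own terms, your proposal has a fatal gap at its foundation. Everything rests on your Claim that every nonzero $\mathbb{F}$-algebra homomorphism $\varphi\colon \mathcal{C}^\infty(N,\mathbb{F})\to\mathbb{F}$ is an evaluation, and that Claim is false in exactly the generality the theorem insists on (``not necessarily second-countable, paracompact or connected''). Take $N$ to be the long line, a connected Hausdorff smooth $1$-manifold that is not paracompact. Every smooth (indeed every continuous) real-valued function on the long line is eventually constant, and $\varphi(f):=$ the eventual value of $f$ is a unital algebra homomorphism into $\mathbb{R}$ that is not evaluation at any point: for any $y$, a compactly supported bump $b$ with $b(y)=1$ has $\varphi(b)=0\neq b(y)$. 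Your proposed repair --- use idempotents to localize $\varphi$ to a single connected component and then ``re-run the exhaustion argument there'' --- cannot work, because the failure already occurs on one connected component, and a non-paracompact component such as the long line admits no proper smooth exhaustion $p\colon N\to[0,\infty)$ at all (every continuous real-valued function on it is bounded, so no preimage of a compact set can be all of $N$ while $N$ is non-compact... more simply, boundedness makes properness impossible). So the obstacle is not, as you suspect, $\varphi$ ``escaping to infinity among the components''; it is that the character space of $\mathcal{C}^\infty(N,\mathbb{F})$ genuinely contains non-evaluations.

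Consequently the theorem cannot be proved by classifying all characters; one must use that $\varphi=\mathrm{ev}_x\circ T$ with $T$ an isomorphism onto the full algebra $\mathcal{C}^\infty(M,\mathbb{F})$ --- this extra hypothesis is what excludes characters like the one above, and exploiting it is precisely the novelty of Mr\v{c}un's paper (the second-countable case of your Claim is the classical ``Milnor exercise,'' and your sketch of that case is essentially right, as is your deduction of the theorem from the Claim and your smoothness argument for $\tau$). A workable route, and the one closest in spirit both to \cite{Mr} and to the present paper's Proposition \ref{phi}, is: first prove the finite-intersection lemma that for any $f_1,\dots,f_k$ there is a point $y\in N$ with $\varphi(f_i)=f_i(y)$ for all $i$ (otherwise $\sum_i|f_i-\varphi(f_i)|^2$ is nowhere zero, hence invertible in the algebra, contradicting that $\varphi$ kills it); then use compactly supported functions transported through $T$ and $T^{-1}$ to trap the candidate point inside a fixed compact subset of $N$, exactly as Proposition \ref{phi} traps $y_0$ inside the compact support of $Tg$, so that the finite intersection property yields a common point. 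It is the Claim itself, not the surrounding architecture, that must be replaced.
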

 Mr\v{c}un also proved an analogous result when the spaces
$ \mathcal{C}^\infty(N,\mathbb{F})$ and
$\mathcal{C}^\infty(M,\mathbb{F})$ are replaced by
$\mathcal{C}_c^\infty(N,\mathbb{F})$
 and $\mathcal{C}_c^\infty(M,\mathbb{F})$ respectively.\\

J. Grabowski(\cite{Gr2}) studied the above question for unital
algebras of functions on more general topological spaces.\\

 For a function $f:X\rightarrow \mathbb{F},$ we denote by $supp \ f,$ the support of $f,$ defined by $supp \ f := \ closure \ of \ \{x \in X: f(x)\neq 0\}.$

\begin{defn}(\cite{Gr2})
For a topological space $X,$ a subalgebra $\mathfrak{S}$ of
$\mathcal{C}(X,\mathbb{F})$ is called \textit{distinguishing} if
the following conditions hold good:
\begin{enumerate}
\item If $f\in \mathfrak{S}$ is nowhere vanishing, then $f^{-1}\in
\mathfrak{S},$ \item for each $p\in X,$ and every open
neighbourhood $U$ of $p,$ there ~exists $g\in \mathfrak{S}$ such
that $g(X) \subseteq [0,1], \ supp(g)\subset U,$ and $g(p')=~1$ if
and only if $p'=p.$
\end{enumerate}
\end{defn}

\begin{thm}(\cite{Gr2})
Let $\mathfrak{S}_i$ be a distinguishing algebra of
$\mathbb{F}-$valued continuous functions on a topological space
$X_i,\ i=1,2.$ Then, every algebra isomorphism $\Phi:
\mathfrak{S}_1 \rightarrow \mathfrak{S}_2$ is the pullback by a
homeomorphism $\varphi:X_2 \rightarrow X_1.$
\end{thm}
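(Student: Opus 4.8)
\smallskip
\noindent\emph{Proof strategy.}
The plan is to recover each space $X_i$ from the algebra $\mathfrak{S}_i$ as a distinguished set of maximal ideals, and then to read off the homeomorphism $\varphi$ from the ideal correspondence induced by $\Phi$. To each $p\in X_i$ I attach the \emph{point ideal} $\mathfrak{m}^{(i)}_p:=\{f\in\mathfrak{S}_i:f(p)=0\}$. By condition~(2) there is, for every $p$, a function with value $1$ at $p$, so evaluation at $p$ is a surjective algebra homomorphism $\mathfrak{S}_i\to\mathbb{F}$; hence $\mathfrak{m}^{(i)}_p$ is a maximal ideal with residue field $\mathbb{F}$ (a \emph{codimension-one} maximal ideal). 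Applying condition~(2) again with the neighbourhood $U=X_i$ shows that $p\mapsto\mathfrak{m}^{(i)}_p$ is injective: if $p\neq p'$, a bump $g$ with $g^{-1}(1)=\{p\}$ satisfies $g(p')\in[0,1)$, so one of $g$ or $g^2-g$ lies in exactly one of $\mathfrak{m}^{(i)}_p,\mathfrak{m}^{(i)}_{p'}$. The same family of bumps shows that the cozero sets $\operatorname{coz}(f):=\{x:f(x)\neq 0\}$, $f\in\mathfrak{S}_i$, form a base for the topology of $X_i$; this is what I will use for continuity at the end.

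The heart of the proof — and the step I expect to be the main obstacle — is to show that every nonzero algebra homomorphism $\chi:\mathfrak{S}_i\to\mathbb{F}$ is an evaluation $\operatorname{ev}_q$ at some $q\in X_i$; equivalently, that the only codimension-one maximal ideals of $\mathfrak{S}_i$ are the point ideals. Condition~(1) is what is designed for this: if $\chi$ were not an evaluation, then for each $q$ there would be $f_q\in\ker\chi$ with $f_q(q)\neq 0$, so the sets $\operatorname{coz}(f_q)$ cover $X_i$; from such a cover one wants to manufacture a nowhere-vanishing element of the ideal $\ker\chi$, which by~(1) would be invertible in $\mathfrak{S}_i$ and force $\ker\chi=\mathfrak{S}_i$, a contradiction. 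When $X_i$ is compact this is immediate — pass to a finite subcover and form (for $\mathbb{F}=\mathbb{R}$) the sum of squares $\sum_j f_{q_j}^2$. The delicate case is the non-compact, non-unital one (e.g.\ $\mathfrak{S}_i=\mathcal{C}_c(X_i)$ with $X_i$ non-compact, where condition~(1) is vacuous): there one first localises, using that a nonzero $\chi$ has $\chi(f_0)=1$ for some $f_0$ and hence annihilates every $h$ with $hf_0=0$, so that the covering problem may be cut down to a compact piece before the previous argument is applied. For $\mathbb{F}=\mathbb{C}$ one must additionally deal with the possible absence of complex conjugates in $\mathfrak{S}_i$ when forming a positive combination, by choosing the finitely many witnesses and the coefficients of their combination so as to keep it non-vanishing on the relevant compact set.

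Granting this, the remainder is formal. Since $\Phi$ is an isomorphism, $\mathfrak{n}_p:=\Phi^{-1}(\mathfrak{m}^{(2)}_p)$ is a codimension-one maximal ideal of $\mathfrak{S}_1$, hence a point ideal $\mathfrak{m}^{(1)}_q$ for a (by injectivity) unique $q\in X_1$; I set $\varphi(p)=q$. Carrying out the same construction with $\Phi^{-1}$ produces a map $X_1\to X_2$ that is a two-sided inverse to $\varphi$, so $\varphi$ is a bijection. To see that $\Phi$ is the pullback by $\varphi$, observe that $\operatorname{ev}^{(2)}_p\circ\Phi$ and $\operatorname{ev}^{(1)}_{\varphi(p)}$ are two surjective homomorphisms $\mathfrak{S}_1\to\mathbb{F}$ with the same kernel $\mathfrak{n}_p$: choosing $a\in\mathfrak{S}_1$ with $\operatorname{ev}^{(2)}_p(\Phi(a))=1$ gives $a^2-a\in\mathfrak{n}_p$, hence $\operatorname{ev}^{(1)}_{\varphi(p)}(a)=1$, and then $f-\operatorname{ev}^{(2)}_p(\Phi(f))\,a\in\mathfrak{n}_p$ for every $f$, so the two homomorphisms agree. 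Thus $\Phi(f)(p)=f(\varphi(p))$ for all $f\in\mathfrak{S}_1$ and $p\in X_2$, i.e.\ $\Phi(f)=f\circ\varphi$.

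Finally $\varphi$ is a homeomorphism. For each $f\in\mathfrak{S}_1$ one has $\varphi^{-1}(\operatorname{coz}(f))=\{p:\Phi(f)(p)\neq 0\}=\operatorname{coz}(\Phi(f))$, which is open in $X_2$ since $\Phi(f)$ is continuous; as the sets $\operatorname{coz}(f)$ form a base of $X_1$, $\varphi$ is continuous, and the symmetric argument applied to $\Phi^{-1}$ shows $\varphi^{-1}$ is continuous. Uniqueness of $\varphi$ is immediate from the fact that the point ideals (equivalently, the functions in $\mathfrak{S}_i$) separate the points of $X_i$.
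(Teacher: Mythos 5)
The paper does not actually prove this theorem --- it is quoted from \cite{Gr2}, with only the remark that Grabowski first treats $\mathcal{C}(X,\mathbb{F})$ via maximal-ideal-space theory and then deduces the distinguishing case --- so your proposal can only be measured against that description and against the strategy the present paper uses for its own Theorem \ref{CCXY}. The formal portions of your outline are correct: the injectivity of $p\mapsto\mathfrak{m}^{(i)}_p$ via $g$ and $g^2-g$, the fact that the cozero sets form a base, the identification of two surjective characters with a common kernel, and the continuity argument at the end all check out.

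The problem is the step you yourself flag as the main obstacle, and it is not merely left unproved --- it is false, so no completion of your sketch is possible. The claim that every nonzero $\mathbb{F}$-valued character of a distinguishing algebra is a point evaluation fails for $X=\omega_1$ (the first uncountable ordinal with the order topology) and $\mathfrak{S}=\mathcal{C}(\omega_1,\mathbb{R})$. Condition (1) is automatic for the full algebra, and condition (2) holds because every point of $\omega_1$ is a countable ordinal and hence has a countable base of clopen intervals $(\alpha_n,p]$ on which the required bump (taking value $1-2^{-n-1}$ on $(\alpha_n,\alpha_{n+1}]$ and $1$ at $p$) can be built; so $\mathfrak{S}$ is distinguishing. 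But every continuous real function on $\omega_1$ is constant on a tail, and $\chi(f):=$ (eventual value of $f$) is a surjective algebra homomorphism onto $\mathbb{R}$ that is not evaluation at any point: the indicator of the clopen set $[0,q]$ lies in $\ker\chi$ but not in $\mathfrak{m}_q$. Your proposed repairs fail exactly here: $\ker\chi$ consists of the eventually-zero functions and contains no nowhere-vanishing element, so condition (1) gives nothing, and any $f_0$ with $\chi(f_0)=1$ is eventually equal to $1$, so its cozero set is not contained in any compact piece and the localisation never reduces the cover to a finite one. What the argument actually needs is the weaker statement that $\Phi$ carries \emph{point} ideals to point ideals, i.e.\ an intrinsic characterisation of the evaluations among all codimension-one maximal ideals; this is precisely what the bumps of condition (2), with $g^{-1}(1)$ a single point, are designed to supply, and it is the ingredient missing from your outline. (The present paper sidesteps characters entirely: its Proposition \ref{phi} matches points by intersecting supports of images and invoking the finite intersection property inside one compact support; some substitute of that kind is unavoidable here.)
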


In \cite{Gr2}, the above result is initially proved for the algebra
$\mathcal{C}(X,\mathbb{F}),$ and then deduced for a distinguishing
subalgebra. The former result is proved using the maximal ideal
space theory for unital Banach algebras. In the present paper, we
obtain using topological techniques, the explicit form of the
algebra isomorphism, first for the algebra
$\mathcal{C}_c(X,\mathbb{F})$(which is not unital unless X is
compact). The corresponding result for the algebra
$\mathcal{C}^{(k)}_c -$ functions on a $\mathcal{C}^{(k)}-$
manifold follows as a consequence. Surprisingly, the proof of this
result does not make any use of the differentiable structure
on the underlying manifold.\\

Before stating our result, we discuss
a
few
more related results.\\

 In \cite{AAM4}, the authors obtained the
general form of a multiplicative bijection between certain classes
$\mathcal{B}$ of $\mathbb{F}-$valued functions.

\begin{thm}(\cite{AAM4})
Let M be a topological real manifold and $\mathcal{B}$ be either
$\mathcal{C}(M,\mathbb{C})$ or $\mathcal{C}_c(M,\mathbb{C}).$ Let
$T: \mathcal{B}\rightarrow \mathcal{B}$ be a multiplicative
bijection. Then there exists some homeomorphism $u: M \rightarrow
M$ and a function $p \in \mathcal{C}(M,\mathbb{C}), \ \Re(p)>0,$
such that either $T(re^{i\theta})(u(x)) = |r(x)|^{p(x)} \
e^{i\theta (x)}$ or $T(re^{i\theta})(u(x)) = |r(x)|^{p(x)} \
e^{-i\theta (x)}.$
\end{thm}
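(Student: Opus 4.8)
\emph{Proof idea.} The plan is to follow the two-stage pattern typical of such results. In Stage~1 one uses only the multiplicative structure of $\mathcal B$ to reconstruct the space $M$, thereby producing the homeomorphism $u$ and reducing $T$ to a map that is \emph{local} at each point. In Stage~2 one analyses this local map pointwise and shows that, since its images must again be continuous functions, its pointwise action is forced into the explicit power form $re^{i\theta}\mapsto r^{p(x)}e^{\pm i\theta}$.

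\emph{Stage 1.} From $T(0)^{2}=T(0)$ and $T(0)T(g)=T(0)$ for all $g$, surjectivity forces $T(0)=0$. When $\mathcal B=\mathcal C(M,\mathbb C)$ (or $M$ is compact) the same idempotent computation for $1$, together with surjectivity, gives $T(1)=1$, and then $g$ is nowhere vanishing if and only if $T(g)$ is, by applying $T$ and $T^{-1}$ to $gg^{-1}=1$; in the non-unital case one argues with the local units of $\mathcal C_c$ and the meet-semilattice of relatively compact cozero sets. Either way $T$ respects the pattern of zeros closely enough to run a Milgram--Gelfand--Kolmogoroff type argument: points of $M$ are recovered from the multiplicative semigroup as maximal families of pairwise non-orthogonal cozero sets, complete regularity of the manifold guaranteeing that points are separated, and $T$ and $T^{-1}$ induce a homeomorphism $u:M\to M$ with $\operatorname{coz}(T(f))=u(\operatorname{coz} f)$. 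Replacing $T$ by $S(f):=T(f)\circ u$ reduces us to a multiplicative bijection $S$ of $\mathcal B$ with $S(f)(x)\neq 0\iff f(x)\neq 0$.

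\emph{Stage 2.} First, $S$ is local: if $f=g$ on a neighbourhood of $x$, choose $k\in\mathcal B$ supported in that neighbourhood with $k(x)\neq 0$; then $fk=gk$, so $S(f)S(k)=S(g)S(k)$, and dividing by $S(k)(x)\neq 0$ gives $S(f)(x)=S(g)(x)$. Thus $S(f)(x)$ depends only on the germ of $f$ at $x$, and $S(f)(x)=1$ whenever $f\equiv 1$ near $x$. One next shows that this germ-dependence collapses to dependence on $f(x)$ alone: for $f(x)=g(x)=a\neq 0$ write $g/f=e^{\psi}$ near $x$ with $\psi(x)=0$, extend $\psi$ over $M$, and examine the one-parameter family $e^{t\psi}$; multiplicativity makes $t\mapsto S(e^{t\psi})(x)$ a homomorphism $\mathbb R\to\mathbb C^{\times}$, and continuity of the functions $S(e^{t\psi})$ together with the zero-set constraint from Stage~1 forces it to be trivial, whence $S(f)(x)=S(g)(x)$. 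Granting this, $S(f)(x)=\Psi_{x}(f(x))$ for a multiplicative bijection $\Psi_{x}$ of $\mathbb C$ fixing $0$, and $x\mapsto\Psi_{x}(w)$ is continuous for each fixed $w$ (it is $S$ applied to a function that is locally constant $\equiv w$ near $x$, which also handles the non-unital case). The torsion subgroup $\mu_{\infty}$ of $\mathbb C^{\times}$ is $\Psi_{x}$-invariant; since $\Psi_{x}(-1)$ is a square root of $\Psi_{x}(1)=1$ other than $1$ it equals $-1$, hence $\Psi_{x}(i)\in\{i,-i\}$, and as $x\mapsto\Psi_{x}(i)$ is continuous and $M$ connected it equals $i^{\varepsilon}$ for a fixed $\varepsilon\in\{\pm 1\}$ --- the sign in the final formula. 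For the modulus, the functions $x\mapsto\Psi_{x}(2^{q})$, $q\in\mathbb Q$, form a coherent system of roots of $x\mapsto\Psi_{x}(2)$, so the latter has a continuous logarithm $\mu$; setting $p:=\mu/\log 2$ yields $\Psi_{x}(2^{q})=2^{qp(x)}$ for all rational $q$, and then applying $S$ to functions whose values are dense in large discs and invoking continuity of the images promotes this to $\Psi_{x}(r)=r^{p(x)}$ for all $r>0$, with $p$ continuous; continuity of $S(f)$ at a zero of $f$ excludes $\Re p\le 0$, since otherwise $|S(f)|=|f|^{\Re p}$ could not tend to $0$ along a sequence approaching that zero. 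The same density-and-continuity argument pins the argument of $re^{i\theta}$ to $e^{\varepsilon i\theta}$, giving $S(re^{i\theta})(x)=r^{p(x)}e^{\varepsilon i\theta}$ and hence the stated form for $T$.

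\emph{The main obstacle.} The hard part is the rigidity in Stage~2. A priori each $\Psi_{x}$ is merely an abstract multiplicative self-bijection of $\mathbb C^{\times}$, and such maps can be wildly discontinuous --- one builds them from a Hamel basis of $\mathbb R$ over $\mathbb Q$ together with arbitrary choices into the divisible group $\mathbb C^{\times}$. The sole leverage against these pathologies is that $S$ sends \emph{every} member of $\mathcal B$ to a continuous function, so each bad $\Psi_{x}$ must be ruled out by producing a continuous $f$ making $x\mapsto\Psi_{x}(f(x))$ discontinuous; organising this so that the exponent $p$ and the sign $\varepsilon$ come out continuous, respectively constant, on all of $M$ simultaneously, and doing so uniformly for the non-unital algebra $\mathcal C_{c}(M,\mathbb C)$ (which has no global idempotents or units) alongside $\mathcal C(M,\mathbb C)$, is the delicate part.
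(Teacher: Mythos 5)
This theorem is quoted in the paper from \cite{AAM4} without proof, so there is no in-paper argument to compare your attempt against; I am judging the proposal on its own terms. Your two-stage architecture --- recover $M$ and the homeomorphism $u$ from the multiplicative semigroup, then reduce $T$ to a family of pointwise multiplicative maps $\Psi_x$ of $\mathbb{C}$ and classify those --- is the standard and correct skeleton for results of this type, and several of the small computations (the idempotent arguments, $\Psi_x(1)=1$, the exclusion of $\Re p\le 0$ via the behaviour at a non-isolated zero) are fine.

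However, the proposal has genuine gaps at exactly the steps that carry the weight. (i) In Stage~1 for $\mathcal B=\mathcal C(M,\mathbb C)$ with $M$ non-compact, maximal families of pairwise non-orthogonal cozero sets also include free families corresponding to points of $\beta M\setminus M$ (e.g.\ on $\mathbb R$, the functions non-vanishing on neighbourhoods of infinity); you need the extra requirement that the family contain a relatively compact cozero set, which you impose only in the non-unital case. (ii) The collapse from germ-dependence to value-dependence rests on the assertion that the character $t\mapsto S(e^{t\psi})(x)$ of $(\mathbb R,+)$ into $\mathbb C^{\times}$ ``is forced to be trivial'' by continuity of the image functions. No mechanism is given, and this is not routine: such characters admit wildly non-measurable examples, and the only leverage --- that $S(f)$ is continuous for \emph{every} $f$ --- must be deployed through explicit test functions; as written the step is circular. (iii) The same difficulty recurs when you promote $\Psi_x(2^q)=2^{qp(x)}$ for $q\in\mathbb Q$ to $\Psi_x(r)=r^{p(x)}$ for all $r>0$: excluding Hamel-basis pathologies and obtaining joint continuity in $(x,r)$ is the actual content of the theorem, and ``applying $S$ to functions whose values are dense in large discs and invoking continuity'' describes the goal rather than proving it --- you yourself label this ``the main obstacle'' without resolving it. (iv) The step $\Psi_x(-1)=-1$ needs injectivity of $\Psi_x$, which does not follow immediately from injectivity of $S$ (two distinct functions can agree at the single point $x$), so it requires its own argument. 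In sum, the proposal is a plausible plan whose decisive rigidity lemmas are asserted rather than proved.
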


A similar result holds good for the algebras $\mathcal{C}^k$ of
$k-$times differentiable functions as well.

\begin{thm}(\cite{AAM4})
Let M be a $\mathcal{C}^k$ real manifold, $1\leq k\leq \infty,$
and $\mathcal{B}$ be one of the following function spaces:
$\mathcal{C}^k(M,\mathbb{C}),\mathcal{C}_c^k(M,\mathbb{C})$ or
$\mathcal{S}_\mathbb{C}(n).$ Let $T: \mathcal{B} \rightarrow
\mathcal{B}$ be a multiplicative bijection. Then there exists some
$C^k -$ diffeomorphism $u:M \rightarrow M$ such that either
$Tf(u(x)) = f(x)$ or $Tf(u(x)) = \overline{f(x)}.$
\end{thm}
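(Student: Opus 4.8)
The plan is to recover the continuous-case structure by the same topological mechanism and then to invoke $\mathcal{C}^k$-regularity only to eliminate the exponent. The decisive feature shared by $\mathcal{C}^k(M,\mathbb{C})$, $\mathcal{C}_c^k(M,\mathbb{C})$ and $\mathcal{S}_{\mathbb{C}}(n)$ is that each contains, around every point and inside every neighbourhood of it, a $[0,1]$-valued function with compact support that equals $1$ precisely at that point, and that in each of them the relation ``$f$ vanishes nowhere'' is recognisable algebraically (by invertibility when the algebra is unital, by the local condition $f\cdot\mathcal{B}=\mathcal{B}$ otherwise). Hence the argument behind the multiplicative Banach-Stone theorem quoted above, the one producing the form $|r|^{p}e^{\pm i\theta}$, goes through unchanged and yields a homeomorphism $u\colon M\to M$, a continuous $p\colon M\to\mathbb{C}$ with $\Re p>0$, and a locally constant $\varepsilon\colon M\to\{+1,-1\}$ such that
\[
 Tf\bigl(u(x)\bigr)=|f(x)|^{\,p(x)}\,e^{\,\varepsilon(x)\,i\arg f(x)}\qquad(f\in\mathcal{B},\ x\in M),
\]
the right-hand side being $0$ where $f(x)=0$. (In the Schwartz case $M=\mathbb{R}^n$; recovering the underlying point set then needs an extra argument accommodating the rapid decay, but the form of $T$ is the same.) It remains to prove that $p\equiv1$ and that $u$ is a $\mathcal{C}^k$-diffeomorphism; the point is that, unlike in the purely topological situation, the requirement $T(\mathcal{B})\subseteq\mathcal{C}^k$ is now a genuine constraint, since $f\mapsto|f|^{p}e^{\pm i\arg f}$ does not preserve $\mathcal{C}^k$ unless $p\equiv1$.

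The first half of the remaining work, that $u$ (and hence $p$) is $\mathcal{C}^k$, uses only the \emph{positive} part of $T$. On strictly positive elements the displayed identity reads $Tf=f^{\,p}\circ u^{-1}$; working in a chart and taking logarithms (for $\mathcal{C}^k(M,\mathbb{C})$ with $f=e^{g}$, and for the two non-unital algebras with $f$ chosen positive on a fixed small open set and compactly supported) shows that $\bigl(p\,g\bigr)\circ u^{-1}$ is $\mathcal{C}^k$ for a generous supply of local real $\mathcal{C}^k$ functions $g$. Taking $g\equiv1$ gives that $p\circ u^{-1}$ is $\mathcal{C}^k$ and, being positive, invertible in $\mathcal{C}^k$; taking $g$ equal to the chart coordinates and dividing shows that the coordinates pull back under $u^{-1}$ to $\mathcal{C}^k$ functions, so $u^{-1}\in\mathcal{C}^k$. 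Since $T^{-1}$ has the analogous form with homeomorphism $u^{-1}$ and exponent $1/(p\circ u^{-1})$, the same reasoning gives $u\in\mathcal{C}^k$; thus $u$ is a $\mathcal{C}^k$-diffeomorphism and $p=(p\circ u^{-1})\circ u$ is $\mathcal{C}^k$.

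Now the crux. Fix $x_{0}\in M$, choose a chart in which $x_{0}=0$, and take $f\in\mathcal{B}$ equal to the first coordinate $x_{1}$ near $x_{0}$. A direct computation gives, near $x_{0}$, $Tf\circ u=\operatorname{sgn}(x_{1})\,|x_{1}|^{\,p(x)}$ (the factor $e^{\varepsilon i\arg x_{1}}$ equals $\operatorname{sgn}(x_{1})$ for either sign of $\varepsilon$), and since $u$ is now a $\mathcal{C}^k$-diffeomorphism this is a $\mathcal{C}^k$ function on a neighbourhood of $\{x_{1}=0\}$. Restricting to the coordinate line through $x_{0}$, the function $t\mapsto\operatorname{sgn}(t)\,|t|^{\,p(t,0,\dots,0)}$ is $\mathcal{C}^k$ near $0$, with a $\mathcal{C}^k$ exponent taking the value $p(x_{0})=:\alpha$ at $t=0$. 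Inspecting one-sided derivatives, $\operatorname{sgn}(t)\,|t|^{\,\beta}$ acquires an unbounded derivative of some order $\le k$ whenever $\beta\le k$ and $\beta$ fails to be a positive odd integer, and a non-zero imaginary part of $\beta$ only adds oscillation that cannot cancel; the variation of the exponent does not help. Hence $\alpha$ is a positive odd integer (if $k=\infty$) or else $\Re\alpha>k$ (if $k<\infty$). Applying this to $T^{-1}$ yields the identical dichotomy for $1/\alpha$. But $\Re\alpha>k\ge1$ forces $\Re(1/\alpha)=\Re\alpha/|\alpha|^{2}\le1/\Re\alpha<1$, incompatible with $1/\alpha$ being an odd positive integer or having real part above $k$; so $\alpha$ and $1/\alpha$ are both positive odd integers, whence $\alpha=1$. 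As $x_{0}$ was arbitrary, $p\equiv1$, and the displayed identity becomes $Tf(u(x))=f(x)$ on $\{\varepsilon=1\}$ and $Tf(u(x))=\overline{f(x)}$ on $\{\varepsilon=-1\}$; local constancy of $\varepsilon$ (constancy when $M$ is connected) yields the two stated alternatives, with $u$ the required diffeomorphism.

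The genuinely delicate point is the apparent circularity between the last two steps: one seems to need $u$ smooth before extracting information about $p$, yet $p\equiv1$ before seeing that $u$ is smooth. The resolution is that the positive part of $T$ already forces $u$ and $p$ to be $\mathcal{C}^k$ on its own, with no control over the size of $p$; only afterwards does one probe $p$ with a function having a transversal simple zero, where the non-smoothness of $\operatorname{sgn}(t)\,|t|^{\,\beta}$ finally becomes visible. The remaining care is bookkeeping: identifying exactly which exponents $\beta$ (possibly complex, possibly varying) keep $\operatorname{sgn}(t)\,|t|^{\,\beta}$ in class $\mathcal{C}^k$, and combining the constraint this gives for $T$ with the one for $T^{-1}$, which is what squeezes $p$ down to the constant $1$.
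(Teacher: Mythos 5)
This statement is quoted in the paper from \cite{AAM4} as background; the paper gives no proof of it (its own Theorem~\ref{CCXY} concerns algebra isomorphisms and leans essentially on additivity, which is unavailable here), so your attempt can only be judged on its own terms. Your overall route --- first extract the pointwise form $Tf(u(x))=|f(x)|^{p(x)}e^{\pm i\arg f(x)}$, then use $\mathcal{C}^k$-regularity of $Tf$ at a transversal zero together with the same constraint for $T^{-1}$ to squeeze $p$ down to $1$ --- is indeed the known strategy (it is the complex analogue of the Mr\v{c}un--\v{S}emrl argument), and your endgame is essentially correct: the characterisation of when $\operatorname{sgn}(t)|t|^{\beta}$ is $\mathcal{C}^k$ (odd positive integer, or $\Re\beta>k$) plus the reciprocity pincer does force $p\equiv1$. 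One small slip there: when $p$ is non-real the exponent of $T^{-1}$ is not $1/p$ but $(1-i\,\Im p)/\Re p$; fortunately its real part is still $1/\Re p$, which is all the pincer needs, but you should say this rather than assert $1/\alpha$.

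The genuine gap is your first step. You cannot ``invoke'' the continuous-case theorem: a multiplicative bijection of $\mathcal{C}^k(M,\mathbb{C})$ is not a multiplicative bijection of $\mathcal{C}(M,\mathbb{C})$, and it need not extend to one, so the structure $Tf(u(x))=K_x(f(x))$ with $K_x(re^{i\theta})=|r|^{p(x)}e^{\pm i\theta}$ has to be re-derived inside the $\mathcal{C}^k$ category. That derivation is where most of the content of the theorem lives: one must recover the points of $M$ from the multiplicative semigroup using $\mathcal{C}^k$ test functions, show that $Tf(u(x))$ depends only on $f(x)$, and then classify the multiplicative self-bijections of $\mathbb{C}$ that are ``realised by'' $\mathcal{C}^k$ functions --- ruling out wild multiplicative maps of $\mathbb{C}^{*}\cong\mathbb{R}_{>0}\times S^1$ that scramble the two factors requires a genuine continuity/measurability argument, not just the existence of bump functions. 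Saying the continuous argument ``goes through unchanged'' papers over all of this. The problem is sharpest for $\mathcal{B}=\mathcal{S}_{\mathbb{C}}(n)$: your proposed algebraic detector of nowhere-vanishing elements, $f\cdot\mathcal{B}=\mathcal{B}$, fails there (a Gaussian vanishes nowhere yet $e^{-|x|^2}\mathcal{S}\neq\mathcal{S}$), so the point-recovery step needs a different mechanism, which you acknowledge only parenthetically. Your second step (smoothness of $u$ via logarithms of positive elements, then of $p$, then of the coordinate pullbacks) is a sound way to break the apparent circularity you identify at the end, and is worth keeping.
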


The fundamental difference between the above results and the
present one is that of the spaces on which the function algebras
are defined. In the former case, these are differentiable
manifolds. Hence the proofs also involved ideas on jet bundles of
functions which which are not available in the current setting of
locally compact Hausdorff spaces. Also, the algebra of smooth
functions is replaced with that of compactly supported continuous
functions defined on a locally compact Hausdorff space. However, the proof of Theorem \ref{CCXY} gives a \textit{unified} proof for the description of isomorphisms on the algebras of $\mathcal{C}_c^{(k)}-$differentiable functions on $k-$differentiable manifolds, for all $1 \leq k\leq \infty.$ \\

We denote by $\mathcal{C}_c(+,\cdot,\ast),$ the algebra of all
compactly supported complex-valued functions on $X,$ under
pointwise addition and multiplication.\\

Our main result is
\begin{thm}\label{CCXY} Let $X$ and $Y$ be locally compact Hausdorff spaces. Let $T:(\mathcal{C}_c(X),+,\cdot) \rightarrow
(\mathcal{C}_c(Y),+,\cdot)$ be an isomorphism of algebras. Then there
exists a homeomorphism $\psi: Y \rightarrow X$ such that either
$Tf = f\circ\psi$ for all $f\in \mathcal{C}_c(X),$ or $Tf =
\overline{f\circ \psi}$ for all $f\in \mathcal{C}_c(X).$
\end{thm}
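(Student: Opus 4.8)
The plan is to reconstruct the space $X$ directly from the ring $\mathcal{C}_c(X)$ — without Gelfand theory or Banach-algebra machinery — by picking out the points of $X$ among the maximal ideals, and then transporting this identification across $T$. I would first record the basic structural fact, immediate from local compactness and Urysohn's lemma, that $\mathcal{C}_c(X)$ has \emph{local units}: for every finite family $f_1,\dots,f_n\in\mathcal{C}_c(X)$ there is $e\in\mathcal{C}_c(X)$ with $ef_i=f_i$, and one may take $e\equiv1$ on a neighbourhood of $\bigcup_i\mathrm{supp}\,f_i$. For $x\in X$ set $M_x=\{f:f(x)=0\}$; since $\mathrm{ev}_x:\mathcal{C}_c(X)\to\mathbb{C}$ is surjective, $M_x$ is a maximal ideal with $\mathcal{C}_c(X)/M_x\cong\mathbb{C}$, and $x\mapsto M_x$ is injective because distinct points are separated by a compactly supported function.

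The heart of the argument is the converse: every maximal ideal $M$ with residue field $\mathbb{C}$ equals some $M_x$. Let $\varphi:\mathcal{C}_c(X)\to\mathbb{C}$ be the quotient map and $S=\{f:\varphi(f)\neq0\}$. From $\varphi(fg)=\varphi(f)\varphi(g)$ and $\mathrm{supp}(fg)\subseteq\mathrm{supp}\,f\cap\mathrm{supp}\,g$ one gets that $\{\mathrm{supp}\,f:f\in S\}$ is a family of compact sets with the finite intersection property, so its intersection is nonempty; moreover it is a single point $x$, for if $x',x''$ both lay in it, a bump $b\equiv1$ near $x'$ vanishing near $x''$ would have $\varphi(b)=0$ (as $x''\notin\mathrm{supp}\,b$), so for a local unit $e$ common to $b$ and to some $f_0\in S$ we would find $\varphi(e-b)=1$ while $e-b$ vanishes near $x'$ — impossible. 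Now $J_x=\{f:f\equiv0\text{ near }x\}\subseteq\ker\varphi$, since such $f$ satisfy $x\notin\mathrm{supp}\,f$ and hence $f\notin S$; so $\varphi$ factors through $\mathcal{C}_c(X)/J_x$, which is a local ring (a class $[g]$ is a unit iff $g(x)\neq0$). A surjection from a local ring onto a field must annihilate its unique maximal ideal, so $\ker\varphi$ is the preimage of $\{[g]:g(x)=0\}$, i.e. $\ker\varphi=M_x$.

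Since $T$ is a ring isomorphism, $M\mapsto T(M)$ carries the residue-$\mathbb{C}$ maximal ideals of $\mathcal{C}_c(X)$ bijectively to those of $\mathcal{C}_c(Y)$, so there is a bijection $\psi:Y\to X$ with $M_{\psi(y)}=T^{-1}(M_y)$. The cozero sets $\mathrm{coz}\,f=\{x:f\notin M_x\}$ form a base for $X$ (Urysohn again), and $\psi^{-1}(\mathrm{coz}\,f)=\{y:f\notin M_{\psi(y)}\}=\{y:Tf\notin M_y\}=\mathrm{coz}(Tf)$ is open; running the same argument with $T^{-1}$ shows $\psi$ is a homeomorphism. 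Next, if $f(\psi y)=g(\psi y)$ then $f-g\in M_{\psi y}=T^{-1}(M_y)$, so $Tf(y)=Tg(y)$; since $\mathrm{ev}_{\psi y}$ is onto, there is a well-defined $\sigma_y:\mathbb{C}\to\mathbb{C}$ with $Tf(y)=\sigma_y\!\big(f(\psi y)\big)$ for all $f$, and additivity, multiplicativity and bijectivity of $T$ make each $\sigma_y$ a field automorphism of $\mathbb{C}$.

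It then remains to pin $\sigma_y$ down. For $c\in\mathbb{C}$ and $y_0\in Y$, taking $f\in\mathcal{C}_c(X)$ equal to $c$ on a neighbourhood of $\psi(y_0)$ shows $y\mapsto\sigma_y(c)=Tf(y)$ is continuous near $y_0$, hence on all of $Y$; with $c=i$ this forces $y\mapsto\sigma_y(i)\in\{i,-i\}$ to be locally constant, and testing further against functions whose ranges sweep out a neighbourhood in $\mathbb{C}$ forces $\sigma_y$ itself to be a continuous automorphism of $\mathbb{C}$, locally constant in $y$; a continuous field automorphism of $\mathbb{C}$ is the identity or complex conjugation, so globally $Tf=f\circ\psi$ or globally $Tf=\overline{f\circ\psi}$. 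I expect the recognition of the point ideals (the second paragraph) to be the main obstacle: $\mathcal{C}_c(X)$ is non-unital and $X$ need not be normal or have $G_\delta$ points, so there is no Gelfand transform to invoke and not even a function peaking at a single point, and the identification has to be wrung out of multiplicativity, the finite-intersection property of supports, and the local ring at a point. The other delicate step is the last one, where the topology of $X$ must be rich enough — through the demand that every $Tf$ be continuous — to force the ambient field automorphism to be continuous, hence trivial or conjugation.
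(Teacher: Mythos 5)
Your proof is essentially correct and reaches the same conclusion by a genuinely different route through the middle of the argument. Where the paper tracks supports directly under $T$ (Proposition~\ref{phi}: $f\in S(x_0)\Rightarrow Tf\in S(y_0)$, then Proposition~\ref{homeo}), you instead characterise the point evaluations intrinsically inside the ring: the maximal ideals with residue field $\mathbb{C}$ are exactly the $M_x=\{f:f(x)=0\}$, proved via the same finite-intersection-property-of-supports engine the paper uses, but packaged through local units and the local ring of germs $\mathcal{C}_c(X)/J_x$. This buys you several things at once: well-definedness and bijectivity of $\psi$ are automatic (ideals transport under any ring isomorphism), continuity falls out of cozero sets forming a base, and the fibrewise map $\sigma_y$ with $Tf(y)=\sigma_y(f(\psi y))$ appears immediately from $M_{\psi(y)}=T^{-1}(M_y)$, replacing the paper's construction of $m(\alpha,y)$ as a ratio $T(\alpha h)(y)/Th(y)$ and the subsequent verification of its additivity and multiplicativity. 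Your recognition-of-point-ideals argument (the bump $b$, the local unit $e$, and $\varphi(e-b)=1$ forcing $x'\in\mathrm{supp}(e-b)$) is complete and correct; it is in my view cleaner than the paper's two-case analysis.

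The one step you leave soft is the last one, and you are right to flag it as delicate: ``testing against functions whose ranges sweep out a neighbourhood'' does not by itself prove that each $\sigma_y$ is a \emph{continuous} automorphism of $\mathbb{C}$, and the passage from ``locally constant choice of identity versus conjugation'' to a global dichotomy needs connectedness of $Y$. Be aware that this is not a defect relative to the paper, which makes exactly the same two leaps (it asserts joint continuity of $m(\cdot,\cdot)$ from equation (\ref{tform}) without argument, and passes from a pointwise to a global alternative). In fact the step genuinely fails at isolated points: for $X=Y$ a single point, $\mathcal{C}_c(X)=\mathbb{C}$ and any wild field automorphism is a ring isomorphism that is neither the identity nor conjugation; a discrete infinite $X$, or a disconnected $X$ with conjugation on one component only, gives further counterexamples to the literal statement. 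So if you want an airtight version, you must either assume $X$ has no isolated points and is connected (and then prove continuity of $\sigma_{y_0}$ honestly, e.g.\ by producing $f$ taking prescribed values along a net converging to $\psi(y_0)$ and using boundedness of $Tf$, or by the standard order-preservation argument on $\mathbb{R}$), or read ``isomorphism of algebras'' as $\mathbb{C}$-linear, which kills the conjugation case. Everything up to that point in your write-up is rigorous.
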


\noindent \begin{rem}Apriori, the isomorphism is defined only on
functions in $\mathcal{C}_c(X,\mathbb{F}).$ But as illustrated in
the proof of the above theorem, the isomorphism has a canonical
extension to the algebra of constant functions. This extension
yields the explicit form of the isomorphism using ideas only from
point-set topology.
\end{rem}

In order to prove Theorem \ref{CCXY}, first we study the general structure of the map $T.$\\

For $x_0\in X,$ define
$$S(x_0): = \{f \in \mathcal{C}_c(G): x_0\in Supp\ f\}.$$

\noindent\begin{prop} \label{phi} Let $X$ and $Y$ be locally
compact Hausdorff spaces. Let $T:\mathcal{C}_c(X) \rightarrow
\mathcal{C}_c(Y)$ be a multiplicative bijection. For any $x_0 \in
X,$ there exists $y_0 \in Y$ such that $Tf\in S(y_0)$ whenever $f
\in S(x_0).$
\end{prop}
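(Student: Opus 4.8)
The plan is to give a purely multiplicative description of the sets $S(x_0)$, push it through $T$, and produce $y_0$ by a compactness argument. I would begin with two routine observations. Since $0=0\cdot 0$ and $0=0\cdot f$, multiplicativity gives $T(0)=T(0)^2$ and $T(0)=T(0)\,T(f)$ for all $f$; as $T$ is onto, $T(0)=T(0)\,a$ for every $a\in\mathcal{C}_c(Y)$, and evaluating at a point where $T(0)$ is nonzero, with $a$ chosen to take the value $2$ there (possible by local compactness), forces $T(0)=0$. Consequently $fg=0$ iff $T(fg)=0$ iff $Tf\cdot Tg=0$, using injectivity of $T$. Next I would record the local characterisation, writing $S'(x_0):=\{g\in\mathcal{C}_c(X):g(x_0)\neq 0\}$, a multiplicative subset: a function $f$ lies in $S(x_0)$ if and only if $fg\neq 0$ for every $g\in S'(x_0)$. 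Indeed, if $g(x_0)\neq 0$ then $\{g\neq 0\}$ is an open neighbourhood of $x_0$, which must meet $\{f\neq 0\}$ because $x_0\in\overline{\{f\neq 0\}}$; conversely, if $f$ vanishes on some open $U\ni x_0$, a bump function supported in $U$ (available by local compactness and Urysohn's lemma) kills $f$.

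The crux is a finite-intersection argument on the images of $S'(x_0)$. The sets $\mathrm{Supp}(Tg)$, $g\in S'(x_0)$, are compact (as $Tg\in\mathcal{C}_c(Y)$) and nonempty (as $g\neq 0$ forces $Tg\neq 0$). For $g_1,\dots,g_n\in S'(x_0)$ the product $g_1\cdots g_n$ again belongs to $S'(x_0)$, since $(g_1\cdots g_n)(x_0)=\prod_i g_i(x_0)\neq 0$; hence $Tg_1\cdots Tg_n=T(g_1\cdots g_n)\neq 0$, and since $\mathrm{Supp}(Tg_1\cdots Tg_n)\subseteq\bigcap_i\mathrm{Supp}(Tg_i)$ this intersection is nonempty. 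Thus $\{\mathrm{Supp}(Tg):g\in S'(x_0)\}$ is a family of compact sets with the finite intersection property, so $\bigcap_{g\in S'(x_0)}\mathrm{Supp}(Tg)\neq\emptyset$; fix $y_0$ in it. Then $Tg\in S(y_0)$ for every $g\in S'(x_0)$.

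Finally I would upgrade this from $S'(x_0)$ to $S(x_0)$. Applying the characterisation in $Y$ at $y_0$ and in $X$ at $x_0$, and using $Tf\cdot h=T(f\cdot T^{-1}h)$, one sees that "$Tf\in S(y_0)$ for all $f\in S(x_0)$" follows once we know $T^{-1}(S'(y_0))\subseteq S'(x_0)$, i.e. that $h(y_0)\neq 0$ implies $(T^{-1}h)(x_0)\neq 0$. Part of this is easy: if $k:=T^{-1}h$ vanished on a neighbourhood of $x_0$, then $kb=0$ for a bump $b$ at $x_0$, whence $h\cdot Tb=T(kb)=0$; but $b\in S'(x_0)$ gives $y_0\in\mathrm{Supp}(Tb)$ while $h(y_0)\neq 0$ makes $\{h\neq 0\}$ a neighbourhood of $y_0$ meeting $\{Tb\neq 0\}$, a contradiction, so at least $k\in S(x_0)$. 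Closing the remaining gap, from $k\in S(x_0)$ to $k(x_0)\neq 0$, is the main obstacle. I would handle it by exploiting that a multiplicative bijection carries prime multiplicative ideals to prime multiplicative ideals, so that $T$ maps $\{k:k(x_0)=0\}$ onto a prime multiplicative ideal $P$ of $\mathcal{C}_c(Y)$; combining this with Hausdorff separation of $y_0$ from any second point of $\bigcap_{g\in S'(x_0)}\mathrm{Supp}(Tg)$ (using bump functions, together with a division trick showing that bumps at $y_0$ would have to lie in $P$), and with the finite-intersection argument applied symmetrically to $T^{-1}$ at $y_0$, one should pin down both that the intersection is the single point $y_0$ and that $S'(y_0)\subseteq T(S'(x_0))$. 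Feeding this back through the characterisation yields $Tf\in S(y_0)$ for every $f\in S(x_0)$, and the same separation argument gives uniqueness of $y_0$, which is what is needed for the subsequent construction of the homeomorphism.
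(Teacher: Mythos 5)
Your overall strategy is the same as the paper's: both proofs fix $x_0$, run a finite-intersection-property argument on the compact sets $\mathrm{supp}(Tg)$ for $g$ ranging over $S'(x_0)=\{g:g(x_0)\neq 0\}$ (the paper's set $E$) to produce $y_0\in\bigcap_{g\in S'(x_0)}\mathrm{supp}(Tg)$, and then must upgrade the conclusion from $g\in S'(x_0)$ to general $f\in S(x_0)$. Your preliminary steps ($T(0)=0$, the characterisation of $S(x_0)$ in terms of $S'(x_0)$, and the FIP argument) are all correct.

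The genuine gap is the one you name yourself: to handle $f$ with $f(x_0)=0$ but $x_0\in\mathrm{supp}\,f$ you need $T^{-1}(S'(y_0))\subseteq S'(x_0)$, i.e.\ that $h(y_0)\neq 0$ forces $(T^{-1}h)(x_0)\neq 0$, whereas your argument only yields $T^{-1}h\in S(x_0)$. The ``prime multiplicative ideal'' idea does not close it: $\{k:k(x_0)=0\}$ is indeed carried by $T$ to a prime multiplicative ideal of $\mathcal{C}_c(Y)$, but $\{h:h(y_1)=0\}$ is such an ideal for \emph{every} $y_1\in Y$, so primeness alone cannot identify the image with the ideal at $y_0$ --- you are back where you started. (For what it is worth, the paper's own Case~2 asserts $g(x_0)\neq 0$ for $g=T^{-1}h$ without justification, so it elides exactly the same point.) The gap can be closed along lines you gesture at but do not execute. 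First strengthen the FIP conclusion: for $g\in S'(x_0)$ one can construct $g'$ with $gg'=1$ on a neighbourhood of $x_0$, hence $gg'b=b$ for a bump $b$ at $x_0$; applying $T$ and evaluating on $\mathrm{supp}(Tb)\ni y_0$ gives $Tg(y_0)\,Tg'(y_0)=1$, so in fact $T(S'(x_0))\subseteq S'(y_0)$ (this is the paper's Case~1). Then run the entire construction for $T^{-1}$ at $y_0$ to obtain $x_1$ with $T^{-1}(S'(y_0))\subseteq S'(x_1)$; combining the two inclusions gives $S'(x_0)\subseteq S'(x_1)$, and a bump at $x_0$ supported away from $x_1$ forces $x_1=x_0$. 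This yields $T^{-1}(S'(y_0))\subseteq S'(x_0)$, after which your final paragraph goes through. As written, however, the proposal is incomplete at its decisive step.
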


\begin{proof}
First, we observe that for functions $f,g \in \mathcal{C}_c(X),$ we have $Tg=1$ on $supp \ Tf$ whenever $g=1$ on $supp\ f.$\\

The condition $g=1$ on $supp \ f$ gives $f\cdot g=f.$ Since $T$ preserves products, this gives $Tf\cdot Tg = T(f\cdot g) =Tf,$ which guarantees $Tg=1$ wherever $Tf$ is nonzero.

Let $y_0 \in supp\ Tf$ be such that $Tf(y_0)=0.$ This gives a net $(y_\tau)$ converging to $y_0$ such that $Tf(y_\tau) \neq 0$ for any $\tau.$ By the above argument, $Tg(y_\tau)=1$ for all $\tau,$ and hence $Tg(y_0)=1.$\\

Let $E:=\{f\in \mathcal{C}_c(X):
f(x_0) \neq 0\}.$ \\

Fix $g\in E.$ Then $K:= supp(Tg)$ is compact. For $f\in E,$ let $K(f): = K \cap \ supp (Tf).$ For any finite collection $\{f_0:=g,f_1,\cdots , f_k\}$ in $E,$ the product
$\prod_{j=0}^k \ f_j\not \equiv 0.$ This gives by the multiplicativity of the map $T,$ that $T(\prod_{j=0}^k \ f_j) =\prod_{j=0}^k \ Tf_j
 \not \equiv 0.$ This ensures $\cap _{j=0} ^k
\ K_{f_j} \neq \emptyset.$ In otherwords, the collection $\{K_f:f\in E\}$ of closed subsets of the compact set $K$ satisfies finite intersection property. This guarantees  the existence of some element $y_0 \in \bigcap \limits_{f\in E} \ K_f
\neq \emptyset.$ \\

\noindent \textbf{Claim.} For any function $f\in \mathcal{C}_c(X),$ we have $Tf\in S(y_0)$ whenever $f\in S(x_0).$\\

\noindent \textit{Proof of Claim.} We prove the claim in two cases.\\

\noindent \textbf{Case 1.} $f(x_0) \neq 0.$\\

Choose $g \in \mathcal{C}_c(X)$ with $f\cdot g=1$ on a neighbourhood $ V$ of $x_0.$ Let $h\in \mathcal{C}_c(X)$ be such that $h=1$ on a neighbourhood $W$ of $x_0$ and $supp \ h \subseteq V.$ Now, $f\cdot g=1$ on $supp \ h$ gives $Tf\cdot Tg =1$ on $supp\ Th,$ which contains $y_0.$ Thus $Tf\in S(y_0).$\\

\noindent \textbf{Case 2.} $f(x_0) =0.$\\

Suppose $Tf \not \in S(y_0).$ Let $W$ be a neighbourhood of $y_0$ on which $Tf$ vanishes identically. Choose $h\in \mathcal{C}_c(Y)$ with $supp\ h\subseteq W$ and $h(y_0)\neq 0.$ For $g \in \mathcal{C}_c(X)$ with $Tg=h,$ we have
$$0 \equiv Tf\cdot h = Tf\cdot Tg = T(f\cdot g).$$
This means $f\cdot g \equiv 0, $ which is not possible as $g(x_0) \neq 0$ and also $x_0\in supp \ f.$\\
\end{proof}

Let $\varphi:X\rightarrow Y$ be defined as follows:
$$\varphi(x_0) =y_0, \ \textrm{ \  if \ } Tf\in S(y_0) \textrm{ \ for \ all \ } f\in S(x_0).$$

\begin{prop}\label{homeo}
The map $\varphi:X\rightarrow Y$ is a homeomorphism.
\end{prop}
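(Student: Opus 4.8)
The plan is to show $\varphi$ is a well-defined bijection that is continuous in both directions. Well-definedness means checking that the $y_0$ produced by Proposition~\ref{phi} is unique: if $y_0$ and $y_0'$ both satisfy $Tf \in S(y_0) \cap S(y_0')$ for all $f \in S(x_0)$, then in particular for any fixed $f$ with $f(x_0)\neq 0$ we get $y_0, y_0' \in Supp\ Tf$; shrinking the support of such $f$ (using local compactness of $X$ to choose $f$ supported in an arbitrarily small neighbourhood of $x_0$) and using the first case of the claim (where $Tf \cdot Tg = 1$ on a neighbourhood of each such point) one shows that the intersection of $Supp\ Tf$ over all $f \in S(x_0)$ is a single point, forcing $y_0 = y_0'$. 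So $\varphi$ is a genuine function.

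Next I would construct the inverse. Applying Proposition~\ref{phi} to the multiplicative bijection $T^{-1}: \mathcal{C}_c(Y) \to \mathcal{C}_c(X)$ yields a map $\varphi': Y \to X$ with the analogous property: $T^{-1}h \in S(\varphi'(y_0))$ whenever $h \in S(y_0)$. I claim $\varphi' = \varphi^{-1}$. Indeed, given $x_0$, put $y_0 = \varphi(x_0)$; for any $f \in S(x_0)$ we have $Tf \in S(y_0)$, hence $f = T^{-1}(Tf) \in S(\varphi'(y_0))$. Thus every $f \in S(x_0)$ lies in $S(\varphi'(y_0))$, i.e. $x_0 \in Supp\ f$ implies $\varphi'(y_0) \in Supp\ f$; by the uniqueness established above (the intersection of supports of all $f \in S(x_0)$ is $\{x_0\}$) this gives $\varphi'(y_0) = x_0$. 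Symmetrically $\varphi(\varphi'(y_0)) = y_0$, so $\varphi$ is a bijection with inverse $\varphi'$.

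For continuity, I would argue $\varphi$ is an open map — or equivalently, since $\varphi^{-1} = \varphi'$ is of the same type, it suffices to prove continuity of a map of this form and apply it to both $\varphi$ and $\varphi'$. To see $\varphi$ is continuous: let $x_0 \in X$ and let $N$ be an open neighbourhood of $\varphi(x_0) = y_0$. Pick $h \in \mathcal{C}_c(Y)$ with $h(y_0) \neq 0$ and $Supp\ h \subseteq N$, and let $f = T^{-1}h$; then $f(x_0) \neq 0$ (since $y_0 = \varphi(x_0) \in Supp\ h = Supp\ Tf$ forces $x_0 \in Supp\ f$ by the $\varphi' = \varphi^{-1}$ relation, and in fact $f(x_0)\neq 0$ because otherwise Case~2 of the claim applied to $T^{-1}$ gives a contradiction). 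The set $U = \{x : f(x) \neq 0\}$ is then an open neighbourhood of $x_0$, and for any $x \in U$ we have $f \in S(x)$, hence $Tf = h \in S(\varphi(x))$, i.e. $\varphi(x) \in Supp\ h \subseteq N$. Thus $\varphi(U) \subseteq N$, proving continuity at $x_0$. Applying the identical argument to $\varphi'$ shows $\varphi^{-1}$ is continuous, so $\varphi$ is a homeomorphism.

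The main obstacle I expect is the uniqueness step at the start: making precise that $\bigcap_{f \in S(x_0)} Supp\ Tf = \{\varphi(x_0)\}$ rather than merely nonempty. This requires exploiting local compactness of $X$ to manufacture functions $f \in S(x_0)$ — indeed with $f(x_0) \neq 0$ so that Case~1 of the claim applies and $Tf$ is invertible-from-the-right on a neighbourhood of the target point — whose supports shrink down to $x_0$, and then transferring that shrinking to the $Y$ side. Once uniqueness is in hand, the bijectivity and bicontinuity arguments are symmetric applications of Proposition~\ref{phi} to $T$ and $T^{-1}$ and go through routinely.
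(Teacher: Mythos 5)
Your overall architecture (well‑definedness, then $\varphi'=\varphi^{-1}$ via Proposition~\ref{phi} applied to $T^{-1}$, then continuity by pulling back a bump function supported in a prescribed neighbourhood) matches the paper's, but the genuine gap is exactly where you suspect it: the uniqueness step. ``Shrinking the support of $f$ and transferring that shrinking to the $Y$ side'' is not available at this stage --- nothing proved so far bounds $supp\,Tf$ from above in terms of $supp\,f$ (Proposition~\ref{phi} only gives the lower bound $\varphi(supp\,f)\subseteq supp\,Tf$), and obtaining such an upper bound is essentially equivalent to the continuity you are trying to establish, so the sketch is circular. The fix is already contained in your second paragraph, just in the wrong logical order. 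First record the elementary fact that in a locally compact Hausdorff space $\bigcap_{f\in S(x)}supp\,f=\{x\}$ (Urysohn gives, for any $x'\neq x$, an $f\in S(x)$ with $supp\,f$ missing $x'$); note this is the uniqueness your second paragraph actually invokes, not the $Y$-side uniqueness of your first paragraph. Then, assuming no uniqueness anywhere, show: if $y$ works for $x$ (i.e.\ $f\in S(x)\Rightarrow Tf\in S(y)$) and $x'$ works for $y$ under $T^{-1}$, then every $f\in S(x)$ lies in $S(x')$, whence $x'\in\bigcap_{f\in S(x)}supp\,f=\{x\}$, so $x'=x$. Combining this with its $Y$-side mirror yields uniqueness as a corollary: if $y_1,y_2$ both work for $x$, the point working for $y_1$ under $T^{-1}$ must be $x$, and then $g\in S(y_1)\Rightarrow T^{-1}g\in S(x)\Rightarrow g\in S(y_2)$ forces $y_2\in\bigcap_{g\in S(y_1)}supp\,g=\{y_1\}$.

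A second, smaller issue: in the continuity step you need $f(x_0)\neq 0$ (not merely $x_0\in supp\,f$) so that $U=\{f\neq 0\}$ is a neighbourhood of $x_0$, and your justification via ``Case~2 applied to $T^{-1}$'' does not deliver this --- Case~2 only ever concludes membership in a support; it produces no contradiction. The implication you need, $h(y_0)\neq 0\Rightarrow (T^{-1}h)(x_0)\neq 0$, is true but must be extracted from the Case~1 mechanism: choose $u$ with $h\cdot u=1$ on a neighbourhood $W$ of $y_0$ and $m\in S(y_0)$ with $supp\,m\subseteq W$; the observation at the start of the proof of Proposition~\ref{phi}, applied to $T^{-1}$, gives $T^{-1}h\cdot T^{-1}u=1$ on $supp\,(T^{-1}m)\ni x_0$, hence $(T^{-1}h)(x_0)\neq 0$. (The paper's own net argument for continuity quietly relies on the same implication, so the omission is shared, but your cited reason is not a proof of it.) With these two repairs your proposal is correct and lands essentially on the paper's argument; your open-set formulation of continuity is, if anything, a little cleaner than the paper's net-based one.
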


\begin{proof}
  First we ensure that the map is well-defined. Suppose there exists $x_0\in X$ with $\varphi(x_0)=y_1 , \ \varphi(x_0)=y_2$ and $y_1 \neq y_2.$ Let $g_1,g_2 \in \mathcal{C}_c(Y)$ be supported in disjoint neighbourhoods $V_1$ and $V_2$ of $y_1$ and $y_2,$ respectively, with $g_1(y_1)
\neq 0$ and $g_2(y_2) \neq 0.$ Then for $f_1$ and $f_2$ such that $Tf_1=g_1$ and $Tf_2 =g_2,$ we have
  $$0 \equiv g_1\cdot g_2 =Tf_1 \cdot Tf_2 =T(f_1\cdot f_2),$$
  and hence $f_1\cdot f_2 \equiv 0.$ This is in contradiction with $(f_1 \cdot f_2)(x_0) \neq 0.$\\

  Repeating the above arguments for $T^{-1}$ instead of $ T$ gives that $\varphi$ is a bijection.\\

  Suppose $\varphi$ is not continuous at some point $x_0.$ Let $(x_\tau)$ be a net converging to $x_0 \in X$ with $\varphi(x_\tau)$ not converging to $\varphi(x_0).$ Let $h\in \mathcal{C}_c(Y)$ be such that $h(\varphi(x_0))=1$ and $supp \ h \subseteq V,$ for a neighbourhood $V$ of $\varphi(x_0)$ which does not contain $\varphi(x_\tau)$ for any $\tau.$ For the function $g\in \mathcal{C}_c(X)$ such that $Tg=h,$ we have $g(x_\tau)=0$ for all $\tau,$ and hence $g(x_0)=0,$ which contradicts $Tg(\varphi(x_0))=1.$ A similar argument for $T^{-1}$ gives that $T$ is a homoemorphism.\\
\end{proof}

We are now ready to prove Theorem \ref{CCXY}.\\

 \noindent \textit{PROOF OF THEOREM \ref{CCXY}.} By Propositions \ref{phi} and \ref{homeo}, we infer that
the spaces X and Y are homeomorphic.\\

Though the map $T$ is apriori defined only on $\mathcal{C}_c(X),$ one
can extract information as to how $T$ acts on the constant
functions on $X,$ which
we denote just by the constant itself.\\

For $f,g \in \mathcal{C}_c(X),$ and $\alpha (\neq 0)\in
\mathbb{C},$ we have
$$ T(\alpha f)(y)\ Tg(y) = T(\alpha fg)(y) = T(f)(y) \ T(\alpha g)(y) , \ y\in Y.$$
For $y\in Y,$ let $h \in \mathcal{C}_c(X)$ be such that $Th(y)
\neq 0.$ Then we have \noindent \begin{eqnarray*}
 T(\alpha f)(y)  &=& \frac{T(\alpha h)(y)}{Th(y)} \ {Tf}(y) \textrm{ \ for \ all \ } f\in \mathcal{C}_c(X)\\
   &=& m(\alpha,y) \ Tf(y) \textrm{ \ (say)}.
\end{eqnarray*}
Thus $T(\alpha f)(y) = m(\alpha ,y)\ Tf(y),$ for all $y\in Y.$ By
definition, the function $m(\cdot,\cdot)$ is continuous in the
second variable, as a function
of $y\in Y.$\\

For $y \in Y,$ choose $f\in \mathcal{C}_c(Y)$ such that $Tf(y)
\neq 0.$ Then
 $$Tf(y) = T(1 \cdot f)(y) = m(1,y) Tf(y).$$
Varying $y$ over $Y$ and appropriately varying $f,$ we get
$m(1,\cdot)\equiv 1.$\\ A similar argument gives $m(0,\cdot) \equiv
0.$\\

\noindent For $\alpha,\beta \in \mathbb{C},$ and $f\in \mathcal{C}_c(X)$ such that $Tf(y)\neq 0,$ we have
$$m(\alpha+\beta,y) Tf(y) = T((\alpha+\beta)f)(y) = [m(\alpha,y)+m(\beta,y)] Tf(y).$$
Also,
$$m(\alpha\beta,y)Tf(y) = T(\alpha \beta f)(y) = m(\alpha,y) T(\beta f)(y) = m(\alpha,y) m(\beta,y) Tf(y).$$
Thus $m(\cdot,\cdot)$ is additive and multiplicative in the first variable.\\

Suppose $f\in \mathcal{C}_c(X)$ with $f=c(\neq 0)$ near $x_0.$ Then $\frac{f}{c} = 1$ near $~x_0$ and so $Tf(\cdot) = m(c,\cdot)$ near $y_0(=\varphi(x_0)).$
In particular, we have
$$Tf(y_0) = m(f(x_0),y_0).$$

Next, to find the action of $T$ on a general function in $\mathcal{C}_c(X).$ Fix $g\in \mathcal{C}_c(X)$ and $x_0\in X.$\\

\noindent  If $g(x_0)=0,$ then we have
 $$Tg(y_0)=Tg(\varphi(x_0)) = 0 = m(0,y_0).$$\\
\noindent Suppose $g(x_0)\neq 0.$ For a function $f\in \mathcal{C}_c(X)$ with $f=g(x_0)$ near $x_0,$ we have
$$0=T(f-g)(y_0) = Tf(y_0) - Tg(y_0) = m(f(x_0),y_0) - Tg(y_0) = m(g(x_0),y_0) - Tg(y_0).$$
Working with appropriate locally constant functions $f,$ we get

\begin{eqnarray}
 Tg[\varphi(x)]=&  m(g(x),\varphi(x)) \textrm{ \ for \  all \ } g\in C_c(X). \label{tform}
\end{eqnarray}
 Since all the other functions involved in the above equation are continuous, we get $m(\cdot,\cdot)$ is continuous in both the variables. \\

We have so far observed that for any $y\in Y,$ the map $m(\cdot,y)$ is a continuous additive and multiplicative automorphism of $\mathbb{C}, $ and hence we get $m(\alpha,y) =\alpha$ or $m(\alpha,y)
= \overline{\alpha}$ for all $\alpha\in \mathbb{C}.$ Since $m(\cdot,\cdot)$ is continuous in the second variable, we are left with $m(\alpha,\cdot)\equiv \alpha$ or $m(\alpha,\cdot) \equiv \overline{\alpha}.$\\

 By definition, the map $m(\cdot,\cdot)$ is
canonical to the map $T.$ Hence by abuse of notation, we may define
$T(\alpha):=m(\alpha,y_0)$ for $\alpha \in \mathbb{C},$ and a
fixed $y_0 \in Y.$ Here we emphasise that apriori, the
map $T$ was defined on $\mathcal{C}_c(X),$ which does not contain
the constant functions unless $X$ is compact.\\

Combining the above observations with Equation (\ref{tform}),we get
$$
Tf[\varphi (x)] = T(f(x)), \textrm{ \ for \ all \ }f\in \mathcal{C}_c(X).$$

\noindent Since $\psi =\varphi ^{-1},$ we get that either
$Tf(x) = f[\psi (x)]$ or $Tf(x) = \overline{f[\psi (x)]}.$\\
\qed\\

The proof of the above result provides a unified proof of the description of algebra isomorphisms of
$\mathcal{C}^{(k)}-$ functions on a real $\mathcal{C}^{(k)}-$differentiable manifold, for $0\leq k\leq \infty$. \\

\begin{cor}
Let $M$ and $N$ be real $\mathcal{C}^{(k)}-$manifolds for some $k,
\ 0\leq k\leq \infty.$ Let $T:(\mathcal{C}^{(k)}_c(M),+,\cdot)
\rightarrow (\mathcal{C}^{(k)}_c(N),+,\cdot)$ be an isomorphism of
algebras. Then
there exists a $\mathcal{C}^{(k)}-$ diffeomorphism $\psi: N
\rightarrow M$ such that either $Tf = f\circ\psi$ for all $f\in
\mathcal{C}^k_c(M),$ or $Tf = \overline{f\circ \psi}$ for all
$f\in \mathcal{C}^k_c(M).$
\end{cor}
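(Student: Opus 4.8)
The plan is to mimic the proof of Theorem~\ref{CCXY} verbatim, replacing $\mathcal{C}_c(X)$ and $\mathcal{C}_c(Y)$ by $\mathcal{C}^{(k)}_c(M)$ and $\mathcal{C}^{(k)}_c(N)$ throughout, and then to upgrade the resulting homeomorphism to a $\mathcal{C}^{(k)}$-diffeomorphism at the very end. First I would observe that the algebra $\mathcal{C}^{(k)}_c(M)$ satisfies exactly the two structural features that were used in Propositions~\ref{phi} and~\ref{homeo}: every nowhere-vanishing function has a reciprocal that is still $\mathcal{C}^{(k)}_c$-admissible on the relevant compact set (more precisely, for $f,g$ with $fg=1$ on a neighbourhood, one can arrange $g\in\mathcal{C}^{(k)}_c$), and $\mathcal{C}^{(k)}$-manifolds admit $\mathcal{C}^{(k)}$ bump functions—for every point $p$ and neighbourhood $U$ there is $h\in\mathcal{C}^{(k)}_c(M)$ with $h\equiv 1$ near $p$ and $\mathrm{supp}\,h\subseteq U$. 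These are precisely the ingredients invoked in the arguments for $S(x_0)$, for well-definedness, bijectivity and bicontinuity of $\varphi$, and for constructing the locally constant (here: locally constant $\mathcal{C}^{(k)}$) functions used to extract $m(\cdot,\cdot)$.

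Next I would run the body of the proof of Theorem~\ref{CCXY} without change: define $\varphi:M\to N$ via the $S(x_0)\mapsto S(y_0)$ correspondence, conclude $\varphi$ is a homeomorphism, then extract the function $m(\alpha,y)$ from $T(\alpha f)(y)=m(\alpha,y)\,Tf(y)$, verify $m(1,\cdot)\equiv 1$, $m(0,\cdot)\equiv 0$, additivity and multiplicativity of $m(\cdot,y)$, and establish the key identity $Tg[\varphi(x)]=m(g(x),\varphi(x))$ for all $g\in\mathcal{C}^{(k)}_c(M)$ using locally constant functions. Continuity of $m$ in both variables follows from that identity, and then $m(\alpha,y)$ is a continuous automorphism of $\mathbb{C}$, hence $\alpha\mapsto\alpha$ or $\alpha\mapsto\overline{\alpha}$, uniformly in $y$ by continuity. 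This yields $Tf=f\circ\psi$ or $Tf=\overline{f\circ\psi}$ with $\psi=\varphi^{-1}$, so far only as a homeomorphism.

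The one genuinely new step—and the main obstacle—is promoting $\psi$ (equivalently $\varphi$) to a $\mathcal{C}^{(k)}$-diffeomorphism. The cleanest way is to argue that pulling back by $\varphi$ carries $\mathcal{C}^{(k)}_c(M)$ onto $\mathcal{C}^{(k)}_c(N)$: since $T$ is either $f\mapsto f\circ\psi$ or $f\mapsto\overline{f\circ\psi}$ and $T$ is a bijection of $\mathcal{C}^{(k)}_c(M)$ onto $\mathcal{C}^{(k)}_c(N)$ (and complex conjugation preserves $\mathcal{C}^{(k)}$), the map $f\mapsto f\circ\psi$ is a bijection between these two function spaces. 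Working in local charts, for any coordinate function $x_i$ near a point of $N$ one multiplies by a $\mathcal{C}^{(k)}_c$ bump to get a compactly supported $\mathcal{C}^{(k)}$ function whose composition with $\psi$ must again be $\mathcal{C}^{(k)}$; this forces each component of $\psi$ to be $\mathcal{C}^{(k)}$ in a neighbourhood of every point, hence $\psi\in\mathcal{C}^{(k)}(N,M)$. Applying the same reasoning to $T^{-1}$ (which is pullback by $\varphi$, again composed with conjugation or not) gives $\varphi\in\mathcal{C}^{(k)}(M,N)$, so $\psi$ is a $\mathcal{C}^{(k)}$-diffeomorphism. I expect the subtlety to lie entirely in this chart-level regularity bootstrap—checking that the bump-function multiplication is legitimate in the $\mathcal{C}^{(k)}$ category and that the local conclusion is chart-independent—while the rest is a transcription of the already-proven topological argument.
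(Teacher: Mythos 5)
Your proposal is correct and follows essentially the same route as the paper, whose own proof of this corollary is just the remark that it ``follows from the proof of Theorem~\ref{CCXY}'' and that the $\mathcal{C}^{(k)}$-regularity of $\psi$ ``follows from the explicit form of the algebra isomorphism.'' You have simply made explicit the two points the paper leaves implicit --- that all auxiliary functions in the proof of Theorem~\ref{CCXY} can be taken in the $\mathcal{C}^{(k)}$ category, and the chart-level bootstrap (composing cut-off coordinate functions with $\psi$ and with $\varphi$) that upgrades the homeomorphism to a $\mathcal{C}^{(k)}$-diffeomorphism.
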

\begin{proof}
Follows from the proof of Theorem \ref{CCXY}. The conclusion that
the map $\psi$ is a $\mathcal{C}^{(k)}-$ diffeomorphism follows
from the explicit form of the algebra isomorphism.
\end{proof}

 Theorem \ref{CCXY} proves that the topology of X is completely
determined by the algebraic structure of
$(\mathcal{C}_c(X),+,\cdot).$ It is natural to ask if
$\mathcal{C}_c(X)$ carries any information on the
\textit{algebraic}
structure of $X.$\\

Suppose $X$ is a locally compact group. Then there is a unique
left Haar measure $\mu$ on $X.$ For functions $f,g \in
\mathcal{C}_c(X),$ the convolution product $f\ast g$ is defined as
$$f\ast g (x) = \int \limits_G f(x-y) \ g(y) \ d\mu(y).$$
Equipped with the convolution product, the
space $\mathcal{C}_c(X)$ forms an algebra, which we denote by
$(\mathcal{C}_c(X),+,\ast).$  Theorem \ref{CCXY} states that the
algebra $(\mathcal{C}_c(X),+,\cdot)$ determines the topology of
$X.$\\

It is natural to ask the following questions:

\begin{enumerate}
\item Does $\mathcal{C}_c(X)$ also determine the algebraic
structure of X? \item Does the explicit form of such an
isomorphism depend on the structure of the field $\mathbb{F}?$
\end{enumerate}

Our next result gives a positive answer to the above questions in
the
particular cases when $\mathbb{F}=\mathbb{R}$ or $\mathbb{C}.$\\

\begin{thm}\label{LAGH} Let $G$ and $H$ be locally compact groups. Let $T:\mathcal{C}_c(G) \rightarrow \mathcal{C}_c(H)$ be an
algebra isomorphism under pointwise and convolution
products. Then
there exists a multiplicative homeomorphism $\psi: H \rightarrow
G$ such that either $Tf(y) = f(\psi(y))$ for all $f\in
\mathcal{C}_c(G),$ or $Tf(y) = \overline{f(\psi(y))}$ for all
$f\in \mathcal{C}_c(G).$
\end{thm}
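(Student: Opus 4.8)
The plan is to build on Theorem~\ref{CCXY}, which already handles the pointwise-multiplicative structure. Since $T$ is in particular an isomorphism of the algebras $(\mathcal{C}_c(G),+,\cdot)$ and $(\mathcal{C}_c(H),+,\cdot)$, Theorem~\ref{CCXY} gives a homeomorphism $\psi:H\rightarrow G$ with $Tf=f\circ\psi$ for all $f$ (or $Tf=\overline{f\circ\psi}$; this second case is handled identically, so I will suppress it). What remains is to show that $\psi$ is a group homomorphism, using the extra hypothesis that $T$ also intertwines the convolution products: $T(f\ast g)=Tf\ast Tg$ for all $f,g\in\mathcal{C}_c(G)$.

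The first step is to rewrite the convolution-preservation identity in terms of $\psi$. Writing out both sides at a point $y\in H$ and substituting $Tf=f\circ\psi$, the identity $T(f\ast g)=Tf\ast Tg$ becomes
\begin{equation*}
(f\ast_G g)(\psi(y)) = \int_H f(\psi(y z^{-1}))\, g(\psi(z))\, d\mu_H(z),
\end{equation*}
where $\mu_G,\mu_H$ are the left Haar measures on $G,H$. The left-hand side is $\int_G f(\psi(y) x^{-1}) g(x)\, d\mu_G(x)$. The natural move is to change variables in the right-hand integral via $x=\psi(z)$: since $\psi$ is a homeomorphism, the pushforward $\psi_\ast\mu_H$ is a (positive, regular, nonzero) Borel measure on $G$, and the right-hand side becomes $\int_G f(\psi(y\,\psi^{-1}(x))^{?})\cdots$ — here one must be careful, because $\psi(yz^{-1})$ does not obviously simplify until we know $\psi$ is multiplicative. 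So instead I would argue the other way: first extract multiplicativity of $\psi$, then deduce the Haar measure relation as a by-product.

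To extract multiplicativity, fix $y\in H$ and consider the identity as $f,g$ range over $\mathcal{C}_c(G)$. The key idea is a support/point-mass argument: localize $g$ near a point. Choose a net $g_\alpha\geq 0$ in $\mathcal{C}_c(G)$ with $\int g_\alpha\, d\mu_G=1$ and supports shrinking to a point $x_0=\psi(z_0)$; then the left-hand side $(f\ast_G g_\alpha)(\psi(y))$ converges to $f(\psi(y)\,x_0^{-1})=f(\psi(y)\psi(z_0)^{-1})$ by continuity of $f$. For the right-hand side one needs the corresponding statement after pulling back through $\psi$, which requires knowing that $\psi_\ast\mu_H$ is, up to a positive continuous density, comparable to $\mu_G$ near $x_0$ — the cleanest route is to first observe that the convolution identity forces $\psi_\ast\mu_H$ to be translation-quasi-invariant and hence (by uniqueness of Haar measure, applied after one establishes it is a Haar measure on $G$) equal to a scalar multiple of $\mu_G$. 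Running the approximate-identity argument on both sides then yields $f(\psi(y)\psi(z_0)^{-1}) = c\cdot f(\psi(yz_0^{-1}))$ for all $f$ and all $y,z_0$; taking $f$ with $f\equiv 1$ near a suitable point forces $c=1$ and $\psi(yz_0^{-1})=\psi(y)\psi(z_0)^{-1}$, i.e. $\psi$ is a group isomorphism. Continuity of $\psi$ and $\psi^{-1}$ is already known, so $\psi$ is a topological group isomorphism, which is exactly the claim.

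The main obstacle is the interchange of the two issues just described: the change of variables $x=\psi(z)$ in the convolution integral introduces the unknown measure $\psi_\ast\mu_H$, and one cannot simplify $\psi(yz^{-1})$ before knowing $\psi$ is multiplicative, yet the approximate-identity computation that gives multiplicativity seems to need control of that measure. I expect the resolution is to handle both simultaneously: plug the shrinking approximate identities $g_\alpha$ into the raw identity $T(f\ast g_\alpha)=Tf\ast Tg_\alpha$ \emph{before} changing variables, use regularity of $\psi_\ast\mu_H$ together with continuity of $f\circ\psi$ to evaluate the limit of the right-hand side as a value of $f\circ\psi$ at a single point, and read off from the resulting functional equation, valid for all $f\in\mathcal{C}_c(G)$, both that $\psi$ is multiplicative and that the Radon--Nikodym factor is constant. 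The compactness of supports (so every integral in sight is genuinely finite and the approximate identities exist) and the uniqueness of Haar measure are the two facts that make this work cleanly, and are the reason the hypothesis cannot be weakened away from the group setting.
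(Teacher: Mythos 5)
Your reduction to showing that $\psi$ is multiplicative is exactly right, and matches the paper. But the argument you give for multiplicativity has a genuine gap, which you yourself flag and then do not close. After substituting $Tf=f\circ\psi$ and plugging in approximate identities $g_\alpha$ concentrated at $x_0=\psi(z_0)$, the left side tends to $\bigl(\int_G g_\alpha\,d\mu_G\bigr)\,f(\psi(y)x_0^{-1})$ up to small error, while the right side tends to $\bigl(\int_H g_\alpha\circ\psi\,d\mu_H\bigr)\,f(\psi(yz_0^{-1}))$ up to small error; the two normalizing masses are measured against different, a priori unrelated measures, and the ratio $\int g_\alpha\circ\psi\,d\mu_H\big/\int g_\alpha\,d\mu_G$ on shrinking supports is exactly the quantity you cannot control. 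Your proposed fix --- first show $\psi_\ast\mu_H$ is translation-quasi-invariant and hence a Haar measure --- is circular: left-invariance of $\mu_H$ transports under $\psi$ to invariance of $\psi_\ast\mu_H$ under the maps $x\mapsto\psi(h\,\psi^{-1}(x))$, and these are left translations of $G$ only once you already know $\psi$ is multiplicative. (The approach is salvageable: fix one reference function $f_0>0$ near both candidate points to show the mass ratio converges to a positive limit $\lambda$, deduce $f(\psi(y)x_0^{-1})=\lambda f(\psi(yz_0^{-1}))$ for all $f$, and separate the two points by a test function if they differ. But as written the proposal stops at the obstacle.)

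You should also know that the paper's own argument is purely topological and avoids every measure-theoretic issue above. It uses only two facts: $\operatorname{supp}(Tf)=\varphi(\operatorname{supp}f)$ where $\varphi=\psi^{-1}$ (immediate from the explicit form of $T$), and $\operatorname{supp}(f\ast g)\subseteq(\operatorname{supp}f)(\operatorname{supp}g)$. If $\varphi(xy)\neq\varphi(x)\varphi(y)$, one separates these by disjoint neighbourhoods $V_{xy}$ and $V_{x\diamond y}$, uses continuity of $\varphi$ and of the group products to find neighbourhoods $W_x,W_y$ with $\varphi(W_xW_y)\subseteq V_{xy}$ and $\varphi(W_x)\varphi(W_y)\subseteq V_{x\diamond y}$, and takes nonzero (say nonnegative) $f_x,f_y$ supported in $W_x,W_y$. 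Then $\operatorname{supp}(Tf_x\ast Tf_y)$ is contained both in $V_{x\diamond y}$ (via the containment for supports of convolutions on $H$) and in $V_{xy}$ (via $T(f_x\ast f_y)$ and the containment on $G$), hence is empty, contradicting $f_x\ast f_y\not\equiv 0$. No approximate identities, no pushforward measures, no uniqueness of Haar measure. If you want to keep your analytic route, complete the mass-ratio step; otherwise the support argument is both shorter and strictly more elementary.
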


\begin{proof}
In view of Theorem \ref{CCXY}, it remains to prove that the map
$\psi^{-1} = \varphi:G \rightarrow H$ is a
homomorphism.\\

Suppose $\varphi (xy) \neq \varphi (x) \varphi (y)$ for some $x,y\in G.$\\

We denote $(\varphi \otimes \varphi)(x,y) = \varphi(x) \varphi(y), \ x,y\in G.$ Let $p_G$ and $p_H$ denote the product maps of the respective groups.\\

Suppose $(\varphi \circ p_G) (x,y)\neq [p_H \circ (\varphi \otimes
\varphi)](x,y)$ for some $x,y \in G.$ Let $V_{xy}$ and
$V_{x\diamond y}$ be disjoint neighbourhoods of $(\varphi\circ
p_G)(x,y)$ and $[p_H\circ (\varphi \otimes \varphi)](x,y),$
respectively. Since the maps $(\varphi\circ p_G)$ and $p_H\circ
(\varphi \otimes \varphi)$ are continuous on $G\times G,$ we get
neighbourhoods $W_{x,1},\ W_{y,1}$ and $W_{xy,1}$ of $x,\ y$ and
$xy,$ respectively, such that
$$\varphi (W_{xy,1}) \subseteq V_{xy} \textrm{ \ and  \ } W_{x,1}W_{y,1} \subseteq
W_{xy,1}.$$
\begin{eqnarray} \vspace{-1cm} \textrm{Hence} \hspace{2.3cm} \varphi
(W_{x,1} W_{y,1}) &\subseteq& \varphi (W_{xy,1}) \subseteq
V_{xy}.\hspace{3cm}\nonumber\label{eq1}
\end{eqnarray}

\noindent Similarly, continuity of the map $[p_H \circ (\varphi
\otimes \varphi)],$ gives rise to neighbourhoods $W_{x,2},
W_{y,2}, V_{x,2}$ and $V_{y,2}$ of $x, y,   \varphi(x)$ and
$\varphi(y),$ respectively, such that \vspace{-0.3cm}
$$V_{x,2}\ V_{y,2}\subseteq V_{x\diamond y
}, \ \varphi (W_{x,2})\subseteq V_{x,2} \textrm{ \  and \ }
\varphi (W_{y,2}) \subseteq V_{y,2}.$$

Let $W_x= W_{x,1}\cap W_{x,2}, \ W_y = W_{y,1}\cap W_{y,2}.$
We have \noindent \begin{eqnarray}
  \varphi (W_x)& \subseteq & \varphi (W_{x,2}) \subseteq V_{x,2} =V_x \ (say) \nonumber \\
   \varphi (W_y)& \subseteq & \varphi (W_{y,2}) \subseteq V_{y,2} =V_y \ (say)\nonumber \\
   \varphi (W_x W_y)& \subseteq & \varphi (W_{x,1} W_{y,1})  \subseteq \varphi (W_{xy,1}) \subseteq V_{xy} \label{eq3}
\end{eqnarray}
Let nonzero functions $f_x,f_y \in \mathcal{C}_c(G)$ be such that $supp \ f_x
\subseteq W_x$ and $supp\ f_y \subseteq W_y.$ Let $g_x = Tf_x$ and $g_y = Tf_y.$ Then $T(f_x\ast f_y)
= g_x \ast
g_y \not\equiv ~0.$\\

\noindent We have \vspace{-0.3cm}
\noindent \begin{eqnarray}
 \nonumber supp(g_x \ast g_y) &=&supp \ T(f_x \ast f_y)  \subseteq  (supp \ Tf_x) \ (supp \ Tf_y) \\
\nonumber &= & \varphi (supp \ f_x) \ \varphi (supp \ f_y)
\subseteq \varphi( W_x) \ \varphi (W_y)
\\ & \subseteq & V_{x,2}\ V_{y,2} \subseteq V_{x\diamond y } \label{eq4}
    \end{eqnarray}
But $supp \ (f_x  \ast f_y) \subseteq (supp \ f_x) \ (supp \ f_y)
\subseteq W_x\ W_y.$ By $(\ref{eq3}),$ this gives
 \noindent \begin{eqnarray}
 \nonumber supp(g_x \ast g_y) &= &supp (Tf_x \ast Tf_y) = supp \ T(f_x \ast f_y)\\
  & =& \varphi (supp (f_x  \ast f_y)) \subseteq  \varphi (W_x \ W_y)\subseteq
 V_{xy}. \label{eq5}
 \end{eqnarray}
From $(\ref{eq4})$ and $(\ref{eq5})$, we get $$supp(g_x \ast g_y)
 \subseteq V_{x\diamond y } \cap V_{xy} = \emptyset.$$ This gives $g_x\ast
g_y \equiv 0,$ a contradiction to $f_x\ast f_y\not\equiv 0$. Hence the map $\varphi$ is a homomorphism.\\
\end{proof}

\begin{cor} \label{G=H}
In Theorem \ref{LAGH}, in addition to the hypotheses, if $G=H,$
then the map $\varphi$ is measure-preserving.
\end{cor}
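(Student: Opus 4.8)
The plan is to show that the homeomorphism $\varphi:G\to G$ (with $G=H$) obtained from $T$ pushes the left Haar measure $\mu$ to a left-invariant measure, and then invoke uniqueness of Haar measure up to a positive scalar, finally pinning down the scalar to be $1$ using the fact that $T$ respects convolution.

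First I would record the key structural facts already available: by Theorem \ref{LAGH}, $\varphi$ is a topological group automorphism of $G$, and $T$ has the form $Tf = f\circ\psi$ or $Tf=\overline{f\circ\psi}$ with $\psi=\varphi^{-1}$. Since complex conjugation commutes with integration against a (real, positive) measure and with convolution in the relevant way, I may assume without loss of generality that $Tf = f\circ\psi$. Next I would consider the pushforward measure $\nu := \varphi_*\mu$, defined by $\int_G h\, d\nu = \int_G (h\circ\varphi)\, d\mu$ for $h\in\mathcal{C}_c(G)$; equivalently $\nu(E) = \mu(\varphi^{-1}(E))$ for Borel $E$. Because $\varphi$ is a group automorphism, a direct change of variables shows $\nu$ is left-invariant: for $a\in G$, left translation by $a$ on $G$ corresponds under $\varphi^{-1}$ to left translation by $\varphi^{-1}(a)$, and $\mu$ is left-invariant, so $\nu$ is too. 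Hence $\nu = c\,\mu$ for some constant $c>0$ by uniqueness of Haar measure.

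The remaining step — and the place where the convolution hypothesis is essential — is to show $c=1$. Here I would use that $T$ is an algebra isomorphism for the convolution product. Pick any nonzero $f,g\in\mathcal{C}_c(G)$ and compute $T(f\ast g)$ two ways. On one hand $T(f\ast g) = Tf \ast Tg = (f\circ\psi)\ast(g\circ\psi)$, where the convolution on the right is taken with respect to $\mu$ on $H=G$. On the other hand, writing out $(f\circ\psi)\ast(g\circ\psi)(x) = \int_G f(\psi(x)\psi(y)^{-1}) g(\psi(y))\, d\mu(y)$ and substituting $z=\psi(y)$, i.e. $y=\varphi(z)$, the integral becomes $\int_G f(\psi(x) z^{-1}) g(z)\, d(\varphi_*^{-1}\mu)(z)$. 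But $\varphi_*^{-1}\mu = \mu\circ\varphi$, and by the Haar-uniqueness argument applied to $\varphi^{-1}$ (or directly) this equals $c^{-1}\mu$ or $c\,\mu$; matching this against the requirement $T(f\ast g) = (f\ast g)\circ\psi = \int_G f(\psi(x) z^{-1}) g(z)\, d\mu(z)$ forces the scalar to be $1$, i.e. $c=1$. Thus $\varphi_*\mu = \mu$, which is exactly the assertion that $\varphi$ (equivalently $\psi$) is measure-preserving.

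The main obstacle is the bookkeeping in the last step: one must be careful about which measure the convolution $Tf\ast Tg$ is computed against (it is the fixed Haar measure $\mu$ on the target copy of $G$, not the pushforward), and about the direction of the substitution $z=\psi(y)$ versus $y=\varphi(z)$, since $\varphi$ and $\psi$ are inverse to each other and a misplaced inverse would give $c$ or $c^{-1}$ spuriously. A clean way to organize this is to fix notation once and for all: let $\nu=\varphi_*\mu$, prove $\nu=c\mu$, then observe that the identity $T(f\ast g)=Tf\ast Tg$ for all $f,g$ is equivalent, after the change of variables, to $\int h\,d\nu = \int h\,d\mu$ for all $h$ of the form $h = f\ast g$; since such functions are dense enough (indeed their linear span is dense in $\mathcal{C}_c(G)$ in a suitable sense, or one can simply evaluate at a point where $f\ast g$ is nonzero) this yields $c=1$. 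Everything else is routine once $\varphi$ is known to be a topological automorphism, which Theorem \ref{LAGH} already supplies.
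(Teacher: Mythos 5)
Your proof is correct, but it reaches the conclusion by a different route than the paper. The paper's argument is more hands-on: it evaluates the identity $T(f\ast g)=Tf\ast Tg$ at the single point $e$ (using $\psi(e)=e$ and $\psi(y^{-1})=\psi(y)^{-1}$, both consequences of $\varphi$ being a homomorphism), which yields $\int_G f(y^{-1})g(y)\,d\mu(y)=\int_G f(y^{-1})g(y)\,d\mu(\varphi(y))$; it then disposes of the auxiliary factor by choosing $g\in\mathcal{C}_c(G)$ with $g\equiv 1$ on $(\operatorname{supp} f)^{-1}$, obtaining $\int_G f\,d\mu=\int_G f\circ\psi\,d\mu$ directly for every $f\in\mathcal{C}_c(G)$, with no appeal to uniqueness of Haar measure. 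You instead observe that $\varphi$ is a topological group automorphism, so $\varphi_*\mu$ is again a left Haar measure and hence equals $c\mu$ for some $c>0$, and then use the convolution identity only to pin down $c=1$ by evaluating at one point where $f\ast g\neq 0$. Both arguments are sound and both hinge on the same two ingredients (the multiplicativity of $\varphi$ from Theorem \ref{LAGH} and the convolution-preserving hypothesis); your version buys a shorter verification (one scalar instead of one identity per $f$) at the cost of invoking the uniqueness theorem for Haar measure, while the paper's version is self-contained and produces the measure-preserving identity for all of $\mathcal{C}_c(G)$ in one stroke. Your side remarks are also correct: the reduction to the case $Tf=f\circ\psi$ is harmless because conjugation passes through the (positive) integral, and the density of spans of convolutions is not needed once $\varphi_*\mu=c\mu$ is known, since a single nonvanishing point of $f\ast g$ suffices.
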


\begin{proof} We denote $\psi:=\varphi^{-1}.$
Suppose $Tf = f\circ \psi$ for all $f\in \mathcal{C}_c(G).$ For
the identity element $e$ of $G,$ we have $\psi(e) = e,$ and
hence\noindent
\begin{eqnarray*}
 (f\ast g)(e) = T(f\ast g)(e)  &=& (Tf \ast Tg)(e) \\
   &=& \int\limits_{G} Tf(y^{-1} ) \ Tg(y) \ d\mu(y) \\
   &=& \int\limits_{G} f[\psi(y^{-1} )] \ g[\psi (y)] \ d\mu(y)\\
      &=& \int\limits_{G} f[\psi(y)^{-1} ] \ g[\psi (y)] \
      d\mu(y)\\
     \textrm{i.e., \   } \int\limits_{G} f(y^{-1}) \ g(y) \ d\mu(y)  &=& \int\limits_{G} f(y^{-1}) \ g(y) \ d\mu(\varphi(y))
\end{eqnarray*}
For $f\in \mathcal{C}_c(G),$ choosing $g\in \mathcal{C}_c$ with
$g=1$ on $(supp \ f)^{-1},$ we get
$$\int \limits_G f(y^{-1}) \ d\mu(y) =\int\limits_G f(y^{-1}) \
d\mu(\varphi(y)).$$ Thus we have
$$\int\limits_G f(y) \ d\mu(y) = \int\limits_G (f\circ \psi)(y) \ d\mu(y)$$
for all functions $f\in \mathcal{C}_c(G).$ Hence the map
$\psi$ is measure-preserving.\\

A similar argument applies when $T$ is of the form $Tf =
\overline{f\circ \psi},$ proving
our result.
\end{proof}

We now recall the definition of the Schwartz-Bruhat space of
functions on a locally compact Abelian group.\\

Let $G$ be a locally compact Abelian group with unitary dual $\Gamma.$ We denote by $dx,$ the Haar measure on $G.$ For an integrable function $f$ on $~G,$ its Fourier transform is defined as
$$\widehat{f}(\xi) = \int \limits_G f(x) \ \overline{\langle x, \xi \rangle} \ dx.$$
In the above, $\langle x, \xi \rangle$ stands for the dual action of $\xi \in \Gamma$ on $x \in G.$

For the function $f^*(x) = \overline{f(x^{-1})},$ we have $\widehat{[f^*]}(\xi) = [\widehat{f}(\xi)]^*$ for all $\xi \in \Gamma.$\\

F. Bruhat\cite{Br} extended the notion of a smooth function to a
large class of groups, which encompasses the LCA groups. 1n 1975,
M.S. Osborne characterised the Schwartz-Bruhat space of functions
on a LCA group in terms of the asymptotic behavior of the function
and its Fourier transform.\\

 A function $f: G\rightarrow \mathbb{C}$ is said to
 belong the \textit{Schwartz-Bruhat space} if $f$ satisfies the following
 conditions:
 \noindent \begin{description}
 \item[(a)] $f\in \mathcal{C}^\infty(G).$
 \item[(b)] $P(\partial)f\in L^\infty(G)$ for all polynomial differential
 operators $P(\partial),$ where the polynomial is in $\mathbb{R}^n \times
 \mathbb{Z}^k$
 variables.
 \end{description}
 For any locally compact Abelian group $G,$ we have
 $$\mathcal{S} (G) = \lim \limits_{\rightarrow} \ \mathcal{S} (H/K),$$
 where the direct limit is taken over all pairs $(H,K)$ of subgroups
 of $G$ such that
 \noindent \begin{description}
    \item[(i)] The subgroup $H$ is open and compactly generated,
    \item[(ii)] The subgroup $K$ is compact
    \item[(iii)] The quotient $H/K$ is a Lie group.
 \end{description}

 For a more detailed definition of the space $\mathcal{S} (G),$ we refer the
 reader to \cite{LASB} and \cite{Wa}. The following result gives a complete description of the Fourier transform on the Schwartz-Bruhat space of functions.
 \noindent \begin{thm}(\cite{Wa})
 The Fourier transform maps $\mathcal{S} (G)$ isomorphically onto
 $\mathcal{S} (\Gamma)$ and $\mathcal{S} (G)$ is dense in $L^1(G).$
 \end{thm}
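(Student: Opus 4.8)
The plan is to establish the isomorphism statement by reducing, via the direct-limit description of $\mathcal{S}(G)$ recalled above, to the case of \emph{elementary} LCA groups --- finite products of copies of $\mathbb{R}$, $\mathbb{Z}$, $\mathbb{T}$ and finite groups --- where the assertion is classical, and then to verify that the Fourier transform is compatible with the transition maps of the direct system by means of Pontryagin duality. Density of $\mathcal{S}(G)$ in $L^{1}(G)$ will be handled separately by an approximate-identity argument.

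The first ingredient is the duality bookkeeping. If $H\leq G$ is open and compactly generated and $K\leq H$ is compact with $H/K$ a Lie group, then the annihilator $H^{\perp}\subseteq\Gamma$ is compact (since $H$ is open), $K^{\perp}\subseteq\Gamma$ is open and compactly generated (since $K$ is compact and $K^{\perp}$ is an extension of the Lie group $K^{\perp}/H^{\perp}$ by the compact group $H^{\perp}$), $H^{\perp}\subseteq K^{\perp}$, and $K^{\perp}/H^{\perp}\cong\widehat{H/K}$ is again a Lie group. Thus duality carries each admissible pair $(H,K)$ for $G$ to an admissible pair $(K^{\perp},H^{\perp})$ for $\Gamma$, interchanging the roles of ``open'' and ``compact'', and as $(H,K)$ runs over a cofinal family for $G$ the pairs $(K^{\perp},H^{\perp})$ run over a cofinal family for $\Gamma$. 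On each building block, the Fourier transform of a function that is $K$-invariant and Schwartz along $H/K$ vanishes off $K^{\perp}$ (because $\int_{K}\overline{\langle k,\xi\rangle}\,dk=0$ unless $\xi\in K^{\perp}$), is $H^{\perp}$-invariant there, and is Schwartz along $K^{\perp}/H^{\perp}\cong\widehat{H/K}$; Fourier inversion supplies the inverse in the same manner. Passing to direct limits then yields the topological isomorphism $\mathcal{S}(G)=\lim\limits_{\rightarrow}\mathcal{S}(H/K)\longrightarrow\lim\limits_{\rightarrow}\mathcal{S}(K^{\perp}/H^{\perp})=\mathcal{S}(\Gamma)$, once the elementary case is in hand and the transition maps are seen to be intertwined.

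For the elementary case I would use that $\mathcal{S}$ of a finite product is the completed tensor product of the factors and that the Fourier transform of a product of groups is the product of the Fourier transforms, so that everything reduces to the model computations: $\mathcal{S}(\mathbb{R})\to\mathcal{S}(\mathbb{R})$ (the classical Schwartz-space theorem), $\mathcal{S}(\mathbb{T})=\mathcal{C}^{\infty}(\mathbb{T})\to\mathcal{S}(\mathbb{Z})$ and $\mathcal{S}(\mathbb{Z})\to\mathcal{C}^{\infty}(\mathbb{T})$ (the classical identification, via Fourier series, of smooth functions on the circle with rapidly decreasing sequences), and the trivial finite-group case. For density I would invoke the structure theorem to equip $G$ with an open subgroup $\mathbb{R}^{n}\times K_{1}$, $K_{1}$ compact; since $K_{1}$ is an inverse limit of compact Lie groups, $G$ has arbitrarily small compact subgroups $K$ with $(\mathbb{R}^{n}\times K_{1})/K$ a Lie group, so one can choose an approximate identity $(g_{j})$ with $g_{j}\geq 0$, $\int g_{j}=1$, supports shrinking to the identity, and each $g_{j}$ lying in $\mathcal{S}(G)$. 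Then for $f\in\mathcal{C}_{c}(G)$ --- a dense subspace of $L^{1}(G)$ --- the convolutions $f\ast g_{j}$ have compact support, are invariant under the corresponding $K$ and smooth along the relevant Lie quotient, hence lie in $\mathcal{S}(G)$, while $\|f-f\ast g_{j}\|_{1}\leq\sup_{y\in\operatorname{supp}g_{j}}\|f-\tau_{y}f\|_{1}\to 0$.

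The step I expect to be the main obstacle is the direct-limit bookkeeping of the middle part: one must check carefully that the transition morphisms of the system $\{\mathcal{S}(H/K)\}$ --- extension by zero along $H$ together with the appropriate averaging as the admissible pair varies --- are genuinely intertwined by the Fourier transform with the transition morphisms of the dual system $\{\mathcal{S}(K^{\perp}/H^{\perp})\}$, and that the induced map is a homeomorphism, not merely a linear bijection, for the LF-topologies. This is precisely the substance of \cite{Br} and \cite{Wa}; the role of the requirement that $H/K$ be a Lie group is exactly to confine all the analytic input to the elementary groups, where classical Fourier analysis applies directly.
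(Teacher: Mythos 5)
The paper does not prove this theorem at all --- it is quoted verbatim from \cite{Wa} (with \cite{Br} and \cite{Os} supplying the underlying theory of $\mathcal{S}(G)$) --- so there is no internal proof to compare against. Your outline is precisely the standard argument of those sources: dualize admissible pairs via $(H,K)\mapsto(K^{\perp},H^{\perp})$, reduce to elementary groups where the Fourier isomorphism is classical, and obtain density from a smooth, small-support approximate identity; it is sound in its essentials, and the two points you flag (cofinality of the dual directed system and the topological, not merely algebraic, nature of the limit isomorphism) are exactly where the substantive work in \cite{Wa} lies.
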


 Let $\mathcal{C}_c^\infty  = \mathcal{C}_c^\infty (G): = \{f\in \mathcal{S} (G) : supp \ f \textrm{ \ is  \ compact}\}.$\\

The following results were obtained in \cite{LASB}. However, we
include short versions of their proofs here to illustrate how they
are related to our main results.

 \noindent
\begin{thm}(\cite{LASB}) Let $G$ be a locally compact Abelian
group, and $\Gamma,$ its unitary dual. Let $T:\mathcal{S}(G)
\rightarrow \mathcal{S}(\Gamma)$ be a bijection such that for all
functions $f,g \in \mathcal{S}(G),$ we have \noindent
\begin{description} \item[(a)]  $T(f+g^*) = T(f)+[T(g)]^*,$
 \item[(b)] $T(f\cdot g)=
T(f) \ast T(g )$, \item[(c)] $T(f\ast g)= T(f)\cdot T(g).$
\end{description} Then there exists a measure-preserving multiplicative homeomorphism $\psi$ of $G$ onto itself such that
either $Tf = \widehat{(f\circ \psi)}$ for all $f\in
\mathcal{S}(G),$ or $Tf = \widehat{(\overline{f\circ \psi)}}$ for
all $f\in \mathcal{S}(G).$
\end{thm}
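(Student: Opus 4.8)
The plan is to reduce the statement to the earlier structural results by exploiting the algebraic compatibilities (a)--(c) together with the characterisation of the Fourier transform as an isomorphism $\mathcal{S}(G)\to\mathcal{S}(\Gamma)$ intertwining pointwise and convolution products. First I would observe that conditions (b) and (c) say that $T$ is an algebra isomorphism from $(\mathcal{S}(G),+,\cdot,\ast)$ onto $(\mathcal{S}(\Gamma),+,\ast,\cdot)$, i.e. it swaps the two products. Composing $T$ with the inverse Fourier transform $\mathcal{F}^{-1}:\mathcal{S}(\Gamma)\to\mathcal{S}(G)$, which itself swaps pointwise and convolution products (and is compatible with the $*$-operation via $\widehat{[f^*]}=[\widehat f\,]^*$), yields a map $S:=\mathcal{F}^{-1}\circ T:\mathcal{S}(G)\to\mathcal{S}(G)$ that preserves \emph{both} pointwise and convolution products, and satisfies $S(f+g^*)=S(f)+[S(g)]^*$. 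Thus $S$ is an algebra isomorphism of $(\mathcal{S}(G),+,\cdot,\ast)$ onto itself of the kind treated in Theorem~\ref{LAGH} (after checking that the argument there, carried out for $\mathcal{C}_c$, applies verbatim to $\mathcal{C}_c^\infty\subseteq\mathcal{S}(G)$, which is a distinguishing algebra and is $\ast$-closed enough for the support and convolution manipulations to go through).

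Next I would apply Theorem~\ref{CCXY} and Theorem~\ref{LAGH}, in the form valid for these function algebras, to conclude that there is a multiplicative homeomorphism $\psi:G\to G$ with either $Sf=f\circ\psi$ for all $f$, or $Sf=\overline{f\circ\psi}$ for all $f$. Corollary~\ref{G=H} then upgrades $\psi$ to a measure-preserving map, using that $S$ also respects convolution. Unravelling the definition of $S$ gives $T=\mathcal{F}\circ S$, so that $Tf=\widehat{f\circ\psi}$ in the first case and $Tf=\widehat{\overline{f\circ\psi}}$ in the second; this is exactly the asserted form, and the extra hypothesis (a) is what pins down which of the two alternatives is internally consistent (the $*$-compatibility of $\mathcal{F}$ means both survive, so both possibilities genuinely occur, matching the statement).

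The main obstacle, and the step deserving the most care, is verifying that the topological machinery of Propositions~\ref{phi} and \ref{homeo} and of Theorem~\ref{LAGH} transfers from $\mathcal{C}_c(X)$ to the smaller algebra $\mathcal{C}_c^\infty(G)$ (equivalently, $\mathcal{S}(G)$ via the direct-limit description). Concretely, one must check that $\mathcal{C}_c^\infty(G)$ still contains, for every point and every neighbourhood, smooth bump functions that are $1$ at the point and supported in the neighbourhood --- which holds because each $H/K$ in the direct limit is a Lie group and bump functions can be pulled back and cut off --- and that the support inclusion $\operatorname{supp}(f\ast g)\subseteq(\operatorname{supp} f)(\operatorname{supp} g)$ together with the finite-intersection-property argument producing the point $y_0$ go through unchanged. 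Once these routine but essential verifications are in place, the composition-with-$\mathcal{F}$ trick does all the real work and the proof is short; I would present it in the compressed form promised in the remark preceding the theorem.
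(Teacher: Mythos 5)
Your proposal is correct and follows essentially the same route as the paper: compose $T$ with the inverse Fourier transform to obtain a bijection of $\mathcal{S}(G)$ onto itself preserving both the pointwise and convolution products (and compatible with the $*$/conjugation structure), then invoke the structural results for such self-maps. The paper packages the second step as Theorem~\ref{SGG}, whose proof addresses exactly the transfer-to-$\mathcal{C}_c^\infty$ issue you flag (by showing the reduced map preserves compact supports) before appealing to the $\mathcal{C}_c$ arguments and Corollary~\ref{G=H}.
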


\noindent \begin{proof} Let $f\in \mathcal{S}(G).$ As the Fourier
transform maps $\mathcal{S}(G)$ bijectively onto
$\mathcal{S}(\Gamma),$ we get that $Tf=\widehat{g}$ for a unique
function $g\in \mathcal{S}(G).$ Define $Uf:=g.$ Then the map $U$
maps $\mathcal{S}(G)$ bijectively onto itself, such that for all
functions $f,g\in \mathcal{S}(G),$ we have \noindent
\begin{enumerate} \item $U(f+\overline{g}) =
U(f)+\overline{U(g)},$ \item $U(f\cdot g)= U(f) \cdot U(g )$,
\item  $U(f\ast g)= U(f)\ast U(g).$
\end{enumerate}
The conclusion is a consequence of the following result for the
map $U.$
\end{proof}

\begin{thm}\label{SGG}(\cite{LASB}) Let $G$ be a locally compact
Abelian group. Let $U:\mathcal{S}(G) \rightarrow \mathcal{S}(G)$
be a bijection satisfying the following conditions for all
functions $f,g \in \mathcal{S}(G):$ \noindent
\begin{enumerate} \item $U(f+\overline{g}) = U(f)+\overline{U(g)},$ \item
$U(f\cdot g)= U(f) \cdot U(g)$, \item $U(f\ast g)= U(f)\ast U(g).$
\end{enumerate} Then there exists a measure-preserving multiplicative homeomorphism $\psi: G \rightarrow G$ such that
either $Uf = f\circ \psi$ for all $f\in \mathcal{S}(G),$ or $Uf =
\overline{f\circ \psi}$ for all $f\in \mathcal{S}(G).$
\end{thm}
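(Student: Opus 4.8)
The plan is to reduce this to the theorems already proved, applied to the subalgebra of compactly supported functions sitting inside $\mathcal{S}(G)$. First I would unpack the hypotheses. Setting $g=0$ in (1) gives $U(0)=0$; then $f=0$ in (1) gives $U(\overline g)=\overline{U(g)}$; and replacing $g$ by $\overline g$ in (1) yields $U(f+g)=U(f)+U(g)$. Thus $U$ is an additive bijection of $\mathcal{S}(G)$ which commutes with complex conjugation and preserves both the pointwise product and the convolution product.

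The key step is to isolate $\mathcal{C}_c^\infty(G)$ inside $\mathcal{S}(G)$ using only the algebra structure. I claim that, for $f\in\mathcal{S}(G)$, one has $f\in\mathcal{C}_c^\infty(G)$ if and only if there exists $g\in\mathcal{S}(G)$ with $f\cdot g=f$. Indeed, $fg=f$ forces $g\equiv 1$ on $\mathrm{supp}\,f$; since Schwartz-Bruhat functions vanish at infinity, the set $\{\,|g|\geq 1/2\,\}$ is compact, whence $\mathrm{supp}\,f$ is compact. Conversely, if $\mathrm{supp}\,f$ is compact, the structure theory of LCA groups ($G\cong\mathbb{R}^n\times G_1$ with $G_1$ containing a compact open subgroup) furnishes a compactly supported element $g\in\mathcal{C}_c^\infty(G)$ equal to $1$ on a neighbourhood of $\mathrm{supp}\,f$. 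Because this characterisation involves only the pointwise product, $U$ and $U^{-1}$ each carry $\mathcal{C}_c^\infty(G)$ onto itself, and $\mathcal{C}_c^\infty(G)$ is closed under convolution (a product of compact sets is compact). Hence $U$ restricts to an additive, conjugation-commuting bijection of $\mathcal{C}_c^\infty(G)$ preserving both products.

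Next I would invoke the earlier results. Their proofs use nothing about $\mathcal{C}_c(X)$ beyond the algebra operations and the existence of compactly supported bumps equal to $1$ near a prescribed point, both of which are available for $\mathcal{C}_c^\infty(G)$ over an LCA group; so the arguments of \thmref{CCXY}, \thmref{LAGH} and Corollary~\ref{G=H}, run with $X=Y=G$, produce a measure-preserving multiplicative homeomorphism $\psi\colon G\to G$ such that, on $\mathcal{C}_c^\infty(G)$, either $Uf=f\circ\psi$ for every $f$, or $Uf=\overline{f\circ\psi}$ for every $f$.

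It remains to extend the formula to all of $\mathcal{S}(G)$. Assume the first alternative (the conjugate case is identical). For $f\in\mathcal{S}(G)$ and $h\in\mathcal{C}_c^\infty(G)$ we have $fh\in\mathcal{C}_c^\infty(G)$, so
\[
U(f)\cdot(h\circ\psi)=U(fh)=(fh)\circ\psi=(f\circ\psi)\cdot(h\circ\psi).
\]
Fixing $y\in G$ and choosing $h\in\mathcal{C}_c^\infty(G)$ with $h(\psi(y))\neq 0$, evaluation at $y$ gives $Uf(y)=f(\psi(y))$; since $y$ is arbitrary, $Uf=f\circ\psi$ on $\mathcal{S}(G)$. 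The delicate points all lie in the second and third steps: one must check that $\mathcal{C}_c^\infty(G)$ really coincides with the algebraically defined subset above (this rests on Schwartz-Bruhat functions being $C_0$ and on the supply of compactly supported smooth bumps on a general LCA group), and that the point-set arguments of \thmref{CCXY}, \thmref{LAGH} and Corollary~\ref{G=H} go through verbatim for this subalgebra; the remainder is routine.
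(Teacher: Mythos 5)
Your proposal is correct and follows essentially the same route as the paper: reduce condition (1) to additivity plus conjugation-equivariance, show that $U$ preserves the compactly supported subalgebra via the algebraic relation $f\cdot g=f$ together with the vanishing at infinity of Schwartz--Bruhat functions, and then run the machinery of Theorem~\ref{CCXY}, Theorem~\ref{LAGH} and Corollary~\ref{G=H} on that subalgebra. Your explicit extension step from $\mathcal{C}_c^\infty(G)$ to all of $\mathcal{S}(G)$ by multiplying with bump functions is a detail the paper leaves implicit, and it is carried out correctly.
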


\noindent \begin{proof} For $x_0\in G,$ define
$$S(x_0):= \{f\in \mathcal{S}(G) : x_0 \in supp \ f\}.$$
Let $f,g\in \mathcal{S}(G).$ As in the beginning of the proof of Theorem \ref{CCXY}, we have that $Ug=1$ on $supp(Uf)$ whenever $g=1$ on $supp \ f.$\\

\noindent \textbf{Claim.} If $f\in \mathcal{C}_c(G),$ then $Uf \in \mathcal{C}_c(G).$\\

\noindent Proof of Claim. Let $f\in \mathcal{C}_c(G)$ with $f(x_0) \neq 0.$ Let $g\in \mathcal{S}(G)$ be such that $g=1$ on $supp\ f.$ Then $Ug=1$ on $supp(Uf).$ As $Ug\in \mathcal{S}(G),$ we get that $supp(Uf)$ is compact.\\

We observe that the proof of Corollary \ref{G=H} applies to
functions in $\mathcal{S}(G),$ thus proving the result.
\end{proof}

\noindent 

\begin{thebibliography}{99}
\bibitem{AAM3} S. Alesker, S. Artstein-Avidan, D. Faifman, V. Milman, \textit{A characterization product preserving
maps with applications to a characterization of the Fourier
tranform}, Ill. J. Math. (3) \textbf{54} (2010), 1115 - 1132.

\bibitem{AAM4}  S. Artstein-Avidan, D. Faifman, V. Milman, \textit{On multiplicative
 maps of continuous and smooth functions,} Geometric aspects of functional analysis, Lecture Notes in Math. \textbf{2050}, Springer, Heidelberg (2012), 35-59.

\bibitem{Br} F. Bruhat, \textit{Distributions sur un groupe localement
compact et applications \`{a} l'\'{e}tude des repr\'{e}sentations
des groupes p-adiques,} Bull. Soc. Math. France \textbf{89}
(1961), 43-75.

\bibitem{Fo} G.B. Folland, \textit{A Course in Abstract Harmonic Analysis,} Studies in Advanced
Mathematics, CRC Press, Boca Raton, FL, 1995.

\bibitem{GK} I. Gelfand, A. Kolmogoroff, \textit{On rings of continuous
functions on topological spaces,} C. R. (Dokl.) Acad. Sci. URSS
\textbf{22} (1939), 11-15.

\bibitem{Gr2} J. Grabowski, \textit{Isomorphisms of algebras of smooth functions revisited,} Arch. Math. (Basel) (2) \textbf{85} (2005), 190-196.

\bibitem{LASB} R. Lakshmi Lavanya, \textit{Revisiting the Schwartz-Bruhat
space on locally compact Abelian groups,} submitted.

\bibitem{Mi} A.N. Milgram, \textit{Multiplicative semigroups of continuous
functions,} Duke Math J. \textbf{16} (1949), 377-383.

\bibitem{Mr}  J. Mr\v{c}un, \textit{On isomorphisms of algebras of smooth functions,} Proc. Amer. Math. Soc. (10) \textbf{133} (2005),
3109-3113.

\bibitem{MS} J. Mr\v{c}un, P. \v{S}emrl, \textit{Multiplicative bijections between
algebras of differentiable functions,} Ann. Acad. Sci. Fenn. Math.
(2) \textbf{32} (2007), 471-480.

\bibitem{SS} F.C. S\'{a}nchez, J.C. S\'{a}nchez, \textit{Some preserver problems
on algebras of smooth functions,} Ark. Mat. \textbf{48} (2010),
289-300.

\bibitem{Os} M. Scott Osborne,\textit{ On the Schwartz-Bruhat Space and
the Paley-Wiener theorem for locally compact Abelian groups,} J.
Funct. Anal. \textbf{19} (1975), 40-49.

\bibitem{Wa} A. Wawrzy\'{n}czyk, \textit{On tempered distributions and
Bochner-Schwartz theorem on arbitrary locally compact Abelian
groups,} Colloq. Math. (2) \textbf{19} (1968), 305-318.

\bibitem{We} A. Weil, \textit{Sur certain groupes d'op\'{e}rateurs unitaires,}
Acta Math. \textbf{111} (1964), 143-211.
\end{thebibliography}
\end{document}